\newtheorem{theorem}{Theorem}[subsection]
\newtheorem*{theoremA}{Theorem A}
\newtheorem*{theoremB}{Theorem B}
\newtheorem*{theoremC}{Theorem C}
\newtheorem*{theoremD}{Theorem D}
\newtheorem{definition}[theorem]{Definition}
\newtheorem{claim}[theorem]{Claim}
\newtheorem{lemma}[theorem]{Lemma}
\newtheorem{example}[theorem]{Example}
\newtheorem{corollary}[theorem]{Corollary}
\newtheorem{notion}[theorem]{Notion}
\newtheorem{convention}[theorem]{Convention}
\DeclareMathOperator{\spec}{Spec}
\DeclareMathOperator{\Sym}{Sym}
\newdimen\cdsep
\def\cdstrut{\vrule height .6\cdsep width 0pt depth .4\cdsep}
\def\@cdstrut{{\advance\cdsep by 2em\cdstrut}}
\def\arrow#1#2{
	\ifx d#1
	\llap{$\scriptstyle#2$}\left\downarrow\cdstrut\right.\@cdstrut\fi
	\ifx u#1
	\llap{$\scriptstyle#2$}\left\uparrow\cdstrut\right.\@cdstrut\fi
	\ifx r#1
	\mathop{\hbox to \cdsep{\rightarrowfill}}\limits^{#2}\fi
	\ifx l#1
	\mathop{\hbox to \cdsep{\leftarrowfill}}\limits^{#2}\fi
}
\title{Singularity Properties of Graph Varieties}
\author{Guy Kapon}
\newcommand{\PX}[2]{\widetilde{X}(#1,#2)}
\begin{document}

\maketitle

\begin{abstract}

For a graph $G=(V,E)$, and a symplectic vector space $(W, \left<\cdot,\cdot\right>)$, we define a variety $X(G,W)$ consisting of all functions $w:V\to W$ satisfying $\left<w(u), w(v)\right> = 0$ for any edge $\{u,v\}$ in $G$. We study the singularities of this varieties.




\end{abstract}

\tableofcontents

\section{Introduction}
Let $G = (V,E)$ be a graph, and $W$ a symplectic space. The graph variety of $G,W$, denoted $X(G,W)$, is the set of all assignments $f:V\rightarrow W$ s.t. for each edge $\{v,u\} \in E$ we have $ \left < f(v),f(u) \right > = 0$.

In \cite{AA1} these graph varieties were related to $$Def_{H,n}:=\{\left( g_{1},h_{1},...,g_{n},h_{n} \mid g_{i},h_{i}\in H, [g_{1},h_{1}]\cdot .... \cdot [g_{n},h_{n}]=1 \right) \}$$ where $H$ is an algebraic group over a field of characteristic zero, and were used to prove that for $n$ big enough, it has rational singularities. This in turn was used in \cite{AA2} to give bounds on the growth of irreducible representations of algebraic groups (see Theorem \ref{App1}).

There are natural questions about this graph varieties. For starters, we could ask what are their dimension and are they irreducible?

The graph variety is always singular at zero, but what if we consider the projective version of it, where vectors are replaced by lines? We can relate the singularity of the graph variety with the properties of the projective version, so a natural question is when is the projective graph variety is smooth.

Finally, it's known that for a fixed graph $G$, and for any vector space $W$ with high enough dimension, $X(G,W)$ has rational singularities. It's interesting to see if we can improve those bounds. 


We solve the first two questions when the vector space is large enough, and completely for trees, and improve the results for the third question.

\subsection{Main Results}
For a graph $G$ and a vector space $W$ we denote our graph variety $X(G,W)$. We first prove that it is usually irreducible.

A graph is called $\textsl{d-degenerate}$ if it can be built by consecutively adding vertices and connecting them to at most $d$ other already existing vertices (for example a tree is $1$-degenerate), for a more precise definition see section \ref{subsec:2.4}.

Our first result is concerned with the dimension and irreducibility of $X(G,W)$. 
\begin{theoremA}
Let $G=(V,E)$ be a $d$-degenerate graph with maximal degree $D$, and $(W,\left\langle \cdot , \cdot \right\rangle )$ a vector space with a symmetric or anti-symmetric non-degenerate form $\left < \cdot , \cdot \right >$. If $\dim(W)\geq D+d$, then $X(G,W)$ has dimension $\dim(W)\left|V \right| -\left|E \right| $ and is irreducible.
\end{theoremA}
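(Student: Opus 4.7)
The natural approach is induction on $|V|$. For the inductive step, I would use the $d$-degenerate structure to remove a vertex $v_0$ with at most $k \leq d$ neighbors $u_1, \dots, u_k$ in the remainder $G' = G - v_0$; since $G'$ is still $d$-degenerate with maximal degree $\leq D$, the inductive hypothesis gives that $X(G', W)$ is irreducible of the expected dimension. The main tool is the projection $\pi : X(G, W) \to X(G', W)$ that forgets $f(v_0)$, whose fiber over $f$ is the linear subspace $F_f = \{w \in W : \langle w, f(u_i)\rangle = 0 \text{ for all } i\}$, of dimension $\dim W - \operatorname{rk}(f(u_1), \dots, f(u_k))$.

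The key lemma is that the open locus $U \subset X(G', W)$ where $f(u_1), \dots, f(u_k)$ are linearly independent is nonempty; this is where the hypothesis $\dim W \geq D + d$ enters. I would produce such an $f$ greedily along a degeneracy ordering $y_1, y_2, \dots$ of $G'$: at step $i$, the value $f(y_i)$ must lie in the intersection of at most $d$ hyperplanes $\{f(y_j)\}^\perp$ from earlier neighbors (a subspace of dimension $\geq \dim W - d$), and when $y_i = u_j$ I additionally require $f(y_i)$ to lie outside the span of the at most $k - 1 \leq d - 1$ previously placed $u$-vectors. Since $\dim W - d \geq D \geq d > d - 1$ (using the trivial bound $d \leq D$ for any graph), a valid choice exists at every step. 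With $U$ open and dense, $\pi^{-1}(U) \to U$ is a vector bundle of rank $\dim W - k$, so $\pi^{-1}(U)$ is irreducible of dimension $\dim W \cdot |V| - |E|$. Together with the Krull lower bound $\dim X(G, W) \geq \dim W \cdot |V| - |E|$ (since $X(G, W)$ is cut out in $W^V$ by $|E|$ equations), this shows $Y_1 := \overline{\pi^{-1}(U)}$ is a top-dimensional irreducible component.

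The main obstacle is then to show $X(G, W) = Y_1$, ruling out extra components arising from the ``fat'' fibers $F_f$ over $f \notin U$. I would attack this via a deformation argument: given $(f_0, w_0) \in X(G, W)$ with $f_0 \notin U$, construct an algebraic curve $f_t \in X(G', W)$ with $f_t \in U$ for $t \neq 0$ and $f_t \to f_0$, together with $w_t \in F_{f_t}$ converging to $w_0$. Concretely, I would perturb a dependent $f_0(u_j)$ in a direction $v_j \in W$ that (i) is orthogonal to $f_0(x)$ for each neighbor $x$ of $u_j$ in $G'$ (at most $D - 1$ conditions), (ii) breaks the dependence among the $u_i$-vectors, and (iii) satisfies one pairing condition with $w_0$ ensuring that $w_0$ lies in the flat limit of the fibers $F_{f_t}$. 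The total number of linear constraints on $v_j$ is at most $D + k - 1 \leq D + d - 1$, exactly the room afforded by the hypothesis. A complementary codimension analysis --- showing that for each $r < k$ the rank-$r$ stratum $\{f : \operatorname{rk}(f(u_i)) \leq r\} \subset X(G', W)$ has codimension $\geq k - r$ --- simultaneously rules out competing top-dimensional components and refines the dimension bound.
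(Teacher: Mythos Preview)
Your inductive setup---removing a low-degree vertex $v_0$, projecting to $X(G',W)$, and identifying an open locus $U$ over which $\pi$ is a vector bundle---matches the paper's ``$>$-regular part'' argument, and your greedy construction showing $U\neq\varnothing$ is fine. The divergence is in how you rule out extra irreducible components, and here both of your suggested arguments have gaps.

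For the deformation argument: perturbing a single $f_0(u_j)$ can raise $\operatorname{rk}\bigl(f_0(u_1),\dots,f_0(u_k)\bigr)$ by at most one, so if the initial rank deficiency exceeds one you do not reach $U$ along your curve. This can be repaired by iterating on the rank, but as written the claim ``$f_t\in U$ for $t\neq 0$'' is not justified. For the codimension argument: the bound ``codimension $\geq k-r$'' is \emph{not} enough. Over a rank-$r$ point the fiber of $\pi$ has dimension $\dim W - r$, so your bound only yields
\[
\dim \pi^{-1}\bigl(\{\operatorname{rk}\leq r\}\bigr)\ \leq\ d(G',W)-(k-r)+(\dim W - r)\ =\ d(G,W),
\]
which leaves room for another top-dimensional component. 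You need codimension strictly greater than $k-r$.

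The paper's device is cleaner and sidesteps both issues: on the complement of the regular locus some vector $w(u_j)$ lies in the span of at most $d-1$ others, and one projects away \emph{that} vertex $u_j$ (not $v_0$) to $X(G\setminus u_j,W)$. The fiber is then at most $(d-1)$-dimensional, and since $|N_G(u_j)|\leq D$ the inductive dimension bound gives
\[
\dim(\text{complement})\ \leq\ (d-1)+d(G,W)-\dim W + D\ <\ d(G,W)
\]
once $\dim W\geq D+d$. This single estimate both computes the dimension and forces irreducibility via equidimensionality. Incorporating this trick (project away the dependent vertex rather than the removed one) would close your argument.
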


Note that, in particular, if we think of $X(G,W)$ as the fiber of zero in the map $\phi:W^{V} \rightarrow \mathbb{F}^{E}$ defined by $\phi(f)(\{v,u\}) = \left < f(v), f(u) \right> $, we get that under the theorem conditions that this map is flat at $\phi^{-1}(0)$.

Since $X(G,W)$ is never smooth (for example the point where all vectors assigned are zero is always singular) we then look at the projective version of $X(G,W)$, which we denote by $\PX{G}{W}$. Its definition is the same as $X(G,W)$ only we assign lines in $W$ instead of vectors. It can also be defined as the blow-up of $X(G,W)$ at the sub-variety where some vector is zero.

\begin{theoremB}
	If $\dim(W)\geq D+d$ then $\PX{G}{W}$ is smooth if and only if $G$ is a forest.
\end{theoremB}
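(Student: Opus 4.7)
The plan is to treat the two directions separately: I would build $\PX{G}{W}$ as an iterated projective bundle when $G$ is a forest, and exhibit an explicit singular point when $G$ contains a cycle. I will carry out the construction in the anti-symmetric (symplectic) setting, which is the paper's primary setup.

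For the forest direction, I would first note that edge conditions decouple over connected components, so $\PX{G}{W}$ is a product of its tree-variety factors and it suffices to handle a single tree $T$. I would root $T$ and order its vertices $v_1, \ldots, v_n$ so that each non-root $v_i$ has a unique earlier neighbour $p(v_i)$, and then analyze the forgetful map $\PX{T}{W} \to \PX{T \setminus \{v_n\}}{W}$. Its fiber over a base point is $\mathbb{P}(f(p(v_n))^{\perp}) \cong \mathbb{P}^{\dim W - 2}$, since non-degeneracy of the form makes $f(p(v_n))^{\perp}$ a hyperplane in $W$. The family of these hyperplanes parametrized by $\mathbb{P}(W)$ is a rank-$(\dim W - 1)$ algebraic subbundle of the trivial bundle $W \times \mathbb{P}(W)$, cut out by one bilinear equation, so the forgetful map is the projectivization of a vector bundle and hence a Zariski-locally trivial projective bundle. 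Iterating, with base case $\PX{\{v_1\}}{W} = \mathbb{P}(W)$, presents $\PX{T}{W}$ as a smooth tower over a point.

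For the cycle direction, I would produce a concrete singular point. Pick any $w \in W \setminus \{0\}$ and set $f(v) = w$ for every $v$. Antisymmetry of the form gives $\langle w, w\rangle = 0$, so all edge conditions hold and $[f] \in \PX{G}{W}$. The tangent space to $\mathbb{P}(W)^V$ at $[f]$ is $\bigoplus_v W / \mathbb{F} w$, and the linearization of the edge equation at $e = \{u,v\}$ is $r_e(\delta) = \langle \delta_u, w\rangle - \langle \delta_v, w\rangle$. Letting $\tilde\phi \colon W/\mathbb{F} w \to \mathbb{F}$ be the nonzero functional descended from $\langle \cdot, w\rangle$ (well-defined because $w$ is isotropic, nonzero by non-degeneracy), one has $r_e(\delta) = \tilde\phi(\delta_u) - \tilde\phi(\delta_v)$. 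After fixing an orientation on $G$, a relation $\sum_e c_e r_e = 0$ becomes $c \in \ker(\partial \colon \mathbb{F}^E \to \mathbb{F}^V) = H_1(G; \mathbb{F})$, which is nonzero precisely because $G$ contains a cycle. Thus the Jacobian has rank $|V| - c(G) < |E|$, so the tangent space at $[f]$ strictly exceeds $(\dim W - 1)|V| - |E|$, which by Theorem~A is the actual dimension of $\PX{G}{W}$; hence $[f]$ is singular.

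The most delicate point I anticipate is verifying that the forgetful map in the forest direction is genuinely a Zariski-locally trivial projective bundle, so that smoothness propagates up the tower; this amounts to recognizing the orthogonal-hyperplane family as an algebraic vector subbundle of $W \times \mathbb{P}(W)$, which is standard but deserves a careful write-up. A secondary matter on the cycle side is translating Theorem~A's dimension count for $X(G,W)$ into the one for $\PX{G}{W}$, obtained by quotienting out the rescaling at each vertex.
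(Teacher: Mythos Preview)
Your proposal is correct, and the two directions compare to the paper's argument differently.

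On the cycle side you are doing essentially what the paper does: both pick the constant point $f(v)\equiv w$, both linearize the edge equations to get functionals $\delta\mapsto \langle \delta_u,w\rangle-\langle \delta_v,w\rangle$, and both observe that a cycle produces a nontrivial linear relation among these. The paper first proves this for a single cycle and then invokes a ``monotonicity of singularities'' step (extend any singular point of a subgraph variety by weighting the extra edges by~$0$); you instead package the relation space as $H_1(G;\mathbb F)$ and handle all non-forests at once. Same idea, slightly slicker bookkeeping.

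On the forest side your route is genuinely different. The paper stays with the Jacobian criterion: it shows that on the $>$-regular part $U_1(G,W,>)$ no nonzero edge-weighting can satisfy the singularity condition (peel off the minimal vertex, whose older neighbours are linearly independent, forcing its incident weights to vanish; then induct), and observes that for a tree $O(G,W)\subseteq U_1(G,W,>)$. Your argument is structural rather than infinitesimal: removing a leaf exhibits $\PX{T}{W}\to\PX{T\setminus v_n}{W}$ as $\mathbb P(\pi^*L^{\perp})$ for the tautological hyperplane bundle $L^{\perp}\subset W\otimes\mathcal O$ on $\mathbb P(W)$, hence an iterated projective bundle over a point. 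This buys you smoothness of $\PX{T}{W}$ without ever invoking Theorem~A or the dimension hypothesis for this direction, and it simultaneously gives the global geometry (e.g.\ it recovers the canonical bundle computation the paper does separately). The paper's Jacobian approach, on the other hand, yields the smoothness of the entire $>$-regular part for arbitrary graphs, not just trees, which is information your bundle argument does not see. Your caveat about checking Zariski-local triviality is exactly right and is handled by recognizing $L^{\perp}$ as the kernel of the surjection $W\otimes\mathcal O\to\mathcal O(1)$ induced by the form.
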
    

Finally we use these to prove:
\begin{theoremC}
	If $G$ is a forest and $\dim(W)\geq D+1$ then $X(G,W)$ has rational singularities.
\end{theoremC}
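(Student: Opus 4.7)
To prove Theorem C, I would use the resolution $\pi:\widetilde X(G,W)\to X(G,W)$ provided by Theorem B; since $G$ is a forest, $d=1$, so the hypothesis $\dim W\ge D+1$ matches the hypothesis $\dim W\ge D+d$ of Theorem B and $\widetilde X(G,W)$ is smooth. The plan is to establish (i) normality of $X(G,W)$ and (ii) $R^i\pi_*\mathcal O_{\widetilde X(G,W)}=0$ for $i>0$, which together characterize rational singularities.

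Normality follows from Theorem A, which exhibits $X(G,W)$ as a complete intersection of the expected codimension, hence Cohen-Macaulay. Serre's criterion then reduces normality to smoothness in codimension one, which is a direct Jacobian computation showing that the singular locus has codimension at least two.

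The main task is the vanishing of higher direct images, which I would prove by induction on the number of edges of $G$. The base case $|E|=0$ gives $X(G,W)=W^{|V|}$, smooth. For the inductive step, choose a leaf $v$ of some tree component attached to $u$, and set $G'=G\setminus\{v\}$. I would factor $\pi$ through a partial resolution $\widetilde H$ which remembers line data at every vertex except $v$: concretely, $\widetilde H$ is the total space of the rank $(\dim W-1)$ vector bundle $\ell_u^\perp\subset W\otimes\mathcal O_{\widetilde X(G',W)}$, and $\widetilde X(G,W)\to\widetilde H$ is the blow-up of its zero section. Being a smooth blow-up along a smooth center this contributes no higher direct images, and the remaining map $\tilde\pi:\widetilde H\to X(G,W)$ fits into the commutative square
$$
\begin{array}{ccc}
\widetilde H & \xrightarrow{q} & \widetilde X(G',W) \\
\tilde\pi \downarrow & & \downarrow \pi' \\
X(G,W) & \xrightarrow{q'} & X(G',W)
\end{array}
$$
in which $q$ is affine (vector bundle), $\pi'$ is the inductively constructed resolution for $G'$, and $q'$ is the ``forget $w_v$'' projection, which is affine as the composition of the closed immersion $X(G,W)\hookrightarrow X(G',W)\times W$ with the projection. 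Combining the Leray spectral sequences for $q'\circ\tilde\pi=\pi'\circ q$, and using that $q'_*$ is exact and faithful on quasi-coherent sheaves, reduces the desired vanishing to
$$R^i\pi'_*\bigl(\Sym^\bullet(\ell_u^\perp)^*\bigr)=0,\qquad i>0.$$

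The degree-zero summand is exactly the inductive hypothesis. For higher symmetric powers, the short exact sequence $0\to\ell_u\to W\otimes\mathcal O\to(\ell_u^\perp)^*\to0$ on $\widetilde X(G',W)$ (identifying $W\simeq W^*$ via the form) induces a filtration of $\Sym^k(W\otimes\mathcal O)=\Sym^kW\otimes\mathcal O$ with graded pieces $\ell_u^{\otimes i}\otimes\Sym^{k-i}(\ell_u^\perp)^*$. Since the middle term is a trivial bundle, its higher direct images vanish by the inductive hypothesis, and one can descend the filtration step by step; the whole argument then reduces to the vanishing of $R^i\pi'_*\ell_u^{\otimes m}$ for $m\ge1$. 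The line bundle $\ell_u$ is $\mathcal O(-D_u)$ for an exceptional divisor $D_u\subset\widetilde X(G',W)$, so this negative-divisor vanishing is the main anticipated obstacle; one either strengthens the inductive hypothesis to include such twists or invokes a direct argument exploiting the structure of $\pi'$ as an iterated blow-up along smooth centers.
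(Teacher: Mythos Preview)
Your setup contains a notational slip: in the paper $\PX{G}{W}\subset\mathbb P(W)^V$ is the \emph{projective} graph variety, of dimension $d(G,W)-|V|$, and there is no morphism from it to $X(G,W)$. The resolution the paper actually uses is the incidence variety $Y(G,W)\subset\PX{G}{W}\times W^V$, i.e.\ the total space of $\bigoplus_v p_v^*T$ over $\PX{G}{W}$. Your inductive description (blow-up of the zero section of $\ell_u^\perp$ over the resolution for $G'$) is consistent with $Y(G,W)$, so I read your $\widetilde X$ as $Y$ throughout; but Theorem~B alone does not hand you a resolution.

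More substantively, your approach diverges from the paper's and leaves a real gap. The paper does \emph{not} induct on edges. Instead it observes that $X(G,W)$ is affine and $p:Y(G,W)\to\PX{G}{W}$ is an affine bundle, so $R^qf_*\mathcal O_Y=0$ reduces to the single global statement
\[
H^q\Bigl(\PX{G}{W},\ \bigotimes_{v}\mathcal O_v(i_v)\Bigr)=0\qquad(q>0,\ i_v\ge 0).
\]
These line bundles are globally generated, and the canonical bundle computation in \S\ref{subsec:3.3} shows $\omega_{\PX{G}{W}}^{-1}=\bigotimes_v\mathcal O_v(n-|N(v)|)$ is ample once $n>D$; Kodaira vanishing then kills everything at once. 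No filtration, no twist by negative divisors, no strengthened induction hypothesis.

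By contrast, your argument funnels everything into $R^i\pi'_*\bigl(\ell_u^{\otimes m}\otimes\operatorname{Sym}^{k}(\ell_u^\perp)^*\bigr)=0$ for $m\ge 1$, and you explicitly flag this as ``the main anticipated obstacle'' without resolving it. The filtration trick you propose goes the wrong way: vanishing for $\operatorname{Sym}^k W$ and for $\ell_u^{\otimes m}$ does not formally imply vanishing for the mixed graded pieces, because the projection formula does not apply to $\pi'$ and $\ell_u$ is not $\pi'$-trivial. Strengthening the induction to include all such twists is essentially re-proving the Kodaira-type vanishing by hand. The paper's route---compute $\omega$ once, then invoke Kodaira---is both shorter and complete; your attention to normality via Serre's criterion is a point the paper glosses over, but the core vanishing step in your proposal is not finished.
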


    Also on a different front we prove a combinatorial result, that together with \cite[Corollary 2.4.9]{AA1} proves:
\begin{theoremD}
	If $\dim(W)\geq 2\cdot p_{4}(D)$ where $p_{4}(D)=\frac{D^{2}(D+1)^{2}}{2}-1$, then $X(G,W)$ has rational singularities.  
\end{theoremD}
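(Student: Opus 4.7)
The proof pipeline is to treat [AA1, Corollary 2.4.9] as a black box that reduces rational singularities of $X(G,W)$ to a graph-theoretic bound, and then to establish that bound purely in terms of the maximum degree $D$. Reading off the factor of $2$ in the statement of Theorem D, I expect Corollary 2.4.9 to have the form: if $\dim W \geq 2 \cdot \alpha(G)$ for some graph invariant $\alpha$, then $X(G,W)$ has rational singularities. The task then splits cleanly into (a) identifying this invariant $\alpha$ and (b) proving the combinatorial inequality $\alpha(G) \leq p_4(D)$ for every graph of maximum degree at most $D$.

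The shape of $p_4(D)$ is very suggestive. Rewriting it as $2\binom{D+1}{2}^2 - 1 \approx D^4/2$, and noting the subscript $4$, I expect $\alpha(G)$ to be sensitive to distance-$2$ interactions in $G$ — for instance, pairs of edges both lying in the closed neighborhood of a common vertex, or walks of length $4$. My plan for the combinatorial side is to find a vertex ordering of $G$ such that each newly-added vertex $v$ contributes at most $p_4(D)$ to the relevant invariant; a naive count of edges in the radius-$2$ ball around $v$ gives at most $\binom{D(D+1)/2}{2}$ unordered pairs of edges, whose ordered count is on the order of $D^2(D+1)^2/2$, matching $p_4(D)+1$ exactly. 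I would then propagate the per-vertex bound to a global bound by the peeling/greedy scheme that Corollary 2.4.9 is set up to consume.

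The main obstacle is twofold. First, I must pin down the exact formulation of [AA1, Corollary 2.4.9] to identify which combinatorial quantity of $G$ it actually measures — there is some latitude between several natural parameters (walks of length $4$, pairs of edges through a radius-$1$ or radius-$2$ neighborhood, or a more refined ordering-dependent invariant), and the tight constant $p_4(D)$ is sensitive to this choice. Second, I need to produce the \emph{exact} constant $p_4(D)$ rather than an asymptotic $O(D^4)$; the $-1$ in the formula is a telltale sign that some single configuration (plausibly a diagonal pair $(e,e)$, or one vertex whose contribution is always strictly less than the worst case) must be discarded by a careful argument. Once the inequality $\alpha(G) \leq p_4(D)$ is in hand, Theorem D follows by direct substitution into Corollary 2.4.9 under the hypothesis $\dim W \geq 2 p_4(D)$.
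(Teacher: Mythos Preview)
Your high-level pipeline is right: the paper does invoke \cite[Corollary 2.4.9]{AA1} as a black box and then proves a purely combinatorial fact about $G$. But you have misidentified what that combinatorial fact is, and as a result your proposed argument does not engage with the actual problem.

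The corollary, as the paper restates it, does not give a bound of the form $\dim W \geq 2\alpha(G)$ for a numerical graph invariant $\alpha$. It says: if you can find a vertex weighting $w:V\to\mathbb{Z}^M$ such that the induced partition of the edges (each edge $e$ assigned to the coordinate where $w(e)=w(v_1)+w(v_2)$ is maximal) yields subgraphs $G_i$ with $X(G_i,W_i)$ rational, then $X(G,\bigoplus W_i)$ is rational. The combinatorial content of Theorem~D is therefore a \emph{construction}: exhibit such a weighting with $|M|=p_4(D)$ colors so that every $G_i$ is a matching (disjoint union of edges and isolated vertices). This is not a counting problem about walks of length $4$ or edge pairs in radius-$2$ balls, and a ``peeling'' vertex-ordering bound of the kind you sketch does not produce the required weighting; the argmax mechanism imposes global consistency constraints (the weights live on vertices, not edges) that a local per-vertex count does not address.

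The paper's construction is quite different from what you propose. It runs a BFS from a base vertex to stratify $V$ into levels $V_i$; the induced graph on each level has maximum degree $\leq D-1$, so by induction on $D$ one weights the intra-level edges with $p_4(D-1)$ colors. Then $2D$ further blocks of $D^2$ colors each are introduced to handle the inter-level edges, with explicit weights chosen large enough to dominate the inductive weights where needed and small enough not to disturb them elsewhere. The formula $p_4(D)$ arises from the recursion $p_4(D)=p_4(D-1)+2D^3$ with $p_4(1)=1$, not from a count of edge pairs; your numerological match to $\binom{D(D+1)/2}{2}$ is a coincidence, and the $-1$ is the base case of this recursion rather than a discarded diagonal.
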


\subsection{Applications}
Using the improved bounds of \hyperref[thmC]{Theorem C} for rational singularities for $X(G,W)$, and \cite[Sections 2.3 and 2.4]{AA1}:
\begin{theorem}
\label{thmR}
	For a simple Lie algebra $\mathfrak{g}$ over an algebraically closed field of characteristic zero, let:
	$$\mathfrak{B}(\mathfrak{g})=\begin{cases} 10 & \mathfrak{g}=\mathfrak{sl}_{d} \text{ or } \mathfrak{g}=\mathfrak{so}_{d} \\ 20 & \mathfrak{g}=\mathfrak{sp}_{d} \\ 3\dim(\mathfrak{g})+1 & \mathfrak{g} \text{ is exceptional} \end{cases}$$
	Now suppose that you have a semi-simple algebraic group $H$ over a field of characteristic zero, and $n\geq \mathfrak{B}(\mathfrak{g})/2+1$ for any simple factor of $Lie(H) \otimes \bar{k}$ then $Def_{H,n}$ has rational singularities. 
\end{theorem}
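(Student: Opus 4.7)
The strategy is to combine the reduction from \cite[Sections 2.3 and 2.4]{AA1} with our Theorem C. That reduction assigns, to each simple factor of $Lie(H)\otimes \bar{k}$ with Lie algebra $\mathfrak{g}$ and each integer $n$, a graph $G_n$ together with a vector space $W_{\mathfrak{g}}$ carrying a non-degenerate (symmetric or antisymmetric) bilinear form, in such a way that rational singularities of $X(G_n, W_{\mathfrak{g}})$ for every simple factor imply rational singularities of $Def_{H,n}$. The choice of form depends on the type of $\mathfrak{g}$: for $\mathfrak{sl}_d$ and $\mathfrak{so}_d$ one takes a symmetric trace form, for $\mathfrak{sp}_d$ an antisymmetric form, and for exceptional types one uses the Killing form on $\mathfrak{g}$ itself.

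First I would extract the precise description of $G_n$ and $W_{\mathfrak{g}}$ from \cite{AA1}, and verify that for $n \geq \mathfrak{B}(\mathfrak{g})/2 + 1$ the graph $G_n$ is in fact a forest. The numerical threshold $\mathfrak{B}(\mathfrak{g})/2 + 1$ should be exactly the value of $n$ beyond which the cyclic structure in $G_n$ (induced by the single product relation defining $Def_{H,n}$) disappears, leaving a disjoint union of trees. Next I would compute the maximal degree $D$ of $G_n$ and verify $\dim(W_{\mathfrak{g}}) \geq D + 1$ case by case; the three-way split in the definition of $\mathfrak{B}(\mathfrak{g})$ reflects the different dimensions of $W_{\mathfrak{g}}$ available in each Lie type, namely a quantity growing with $d$ in the classical cases, versus $\dim(\mathfrak{g})$ itself in the exceptional cases. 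At that point Theorem C yields rational singularities of $X(G_n, W_{\mathfrak{g}})$ for every simple factor, and the reduction of \cite{AA1} transfers this conclusion to $Def_{H,n}$.

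The main obstacle is careful bookkeeping inside the \cite{AA1} construction: verifying that $G_n$ is genuinely a forest (not merely sparse) as soon as $n$ crosses the stated threshold, bounding its maximal degree, and matching the constants $10$, $20$, and $3\dim(\mathfrak{g})+1$ against the degree bound $D+1$ produced by the reduction for each Lie type. A secondary but important subtlety is that rational singularities do not always descend along arbitrary morphisms, so one must track how \cite{AA1} transports the property from $X(G_n, W_{\mathfrak{g}})$ to $Def_{H,n}$ (presumably via a flat or smooth factor) and check that this step remains valid under the improved hypothesis coming from Theorem C rather than the coarser bound used in \cite[Corollary 2.4.9]{AA1}.
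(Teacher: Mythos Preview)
Your overall plan---feed Theorem~C into the reduction of \cite[Sections 2.3 and 2.4]{AA1}---is exactly what the paper does; in fact the paper gives no further argument beyond that one sentence, so at the level of strategy you are aligned.

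Where your proposal goes astray is in the mechanism. You posit that the graph $G_n$ depends on $n$ and carries a cycle that ``disappears'' once $n$ crosses the threshold, while the vector space $W_{\mathfrak{g}}$ depends only on $\mathfrak{g}$. In the \cite{AA1} set-up the dependence runs the other way: the graph is a \emph{fixed} tree determined by the type of $\mathfrak{g}$ (so the forest hypothesis of Theorem~C is automatic, not something earned by enlarging $n$), and it is the symplectic space whose dimension grows with $n$. The numerical condition $n\ge \mathfrak{B}(\mathfrak{g})/2+1$ is precisely what makes $\dim W$ reach the $D+1$ bound of Theorem~C, where $D$ is the maximal degree of that fixed tree; the three cases in the definition of $\mathfrak{B}(\mathfrak{g})$ record the different values of $D$ arising for the various Lie types (and the exceptional bound $3\dim(\mathfrak{g})+1$ is unchanged because Theorem~C gives no improvement there). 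So the ``careful bookkeeping'' you anticipate is real, but it is bookkeeping about $D$ and $\dim W$, not about cycles vanishing from a family of graphs. Once you swap those roles, your secondary worry about how rational singularities transfer is handled verbatim by the argument already in \cite{AA1}; nothing in that step depends on which numerical bound you feed in.
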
 

Before it was known for: 	$$\mathfrak{B}(\mathfrak{g})=\begin{cases} 22 & \mathfrak{g}=\mathfrak{sl}_{d} \text{ or } \mathfrak{g}=\mathfrak{so}_{d} \\ 40 & \mathfrak{g}=\mathfrak{sp}_{d} \\ 3\dim(\mathfrak{g})+1 & \mathfrak{g} \text{ is exceptional} \end{cases}$$

Further more \ref{thmR} strengthens the result from \cite[Theorem B]{AA2}:
\begin{theorem}
\label{App1}
	If $G$ is an algebraic group scheme whose generic fiber $G_{\mathbb{Q}}$ is simple, connected, simply connected, and of $\mathbb{Q}$-rank at least two, and $C\geq \mathfrak{B}(\mathfrak{g})$ for any simple factor of $Lie(H) \otimes \bar{k}$ then:
	$$ \left|  \begin{Bmatrix} \text{Irreducible represntaions of } H(\mathbb{Z}) \\ \text{of dimension at most }n \end{Bmatrix}  \right| = O(n^{C}) $$
\end{theorem}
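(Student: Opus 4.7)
The plan is to derive Theorem~\ref{App1} by combining Theorem~\ref{thmR} with the framework of \cite{AA2}; the paper itself notes that Theorem~\ref{thmR} ``strengthens'' \cite[Theorem~B]{AA2}, so the intended argument is the same one carried out there, with the improved rational-singularity threshold of Theorem~\ref{thmR} substituted into the relevant step. Under the hypotheses on the generic fiber---simple, connected, simply connected, and of $\mathbb{Q}$-rank at least two---Margulis super-rigidity together with a Jordan-decomposition style reduction links counting irreducible representations of $H(\mathbb{Z})$ of bounded dimension to integrating suitable local densities on the deformation variety $Def_{H,n}$.

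First I would recall the machinery of \cite{AA2}: one expresses the representation zeta function $\zeta_{H(\mathbb{Z})}(s) = \sum_\rho (\dim\rho)^{-s}$ (summed over irreducible representations) as an Euler product of local zeta functions, and bounds each local factor using an Igusa-type integral over $Def_{H,n}$. When $Def_{H,n}$ has rational singularities, a standard Aizenbud--Avni argument yields convergence of this integral for $s$ in a half-plane whose abscissa is controlled linearly by $n$, and a Tauberian theorem converts this into a polynomial bound on the counting function, with exponent $C$ of the form $2n-2$---matching the relationship $C\geq \mathfrak{B}(\mathfrak{g})$ versus $n\geq \mathfrak{B}(\mathfrak{g})/2+1$ between the present theorem and Theorem~\ref{thmR}.

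Next I would substitute into this framework the improved threshold furnished by Theorem~\ref{thmR}: for each simple factor of $\mathrm{Lie}(H)\otimes \bar k$, rational singularities of $Def_{H,n}$ are guaranteed as soon as $n \geq \mathfrak{B}(\mathfrak{g})/2+1$ with the new values of $\mathfrak{B}(\mathfrak{g})$. Taking $n$ to be the smallest such integer and feeding it into the polynomial-growth bound above yields exactly the desired exponent $C \geq \mathfrak{B}(\mathfrak{g})$.

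The main obstacle was already disposed of in proving Theorem~C and its packaged form Theorem~\ref{thmR}; what remains here is essentially bookkeeping. The only subtle point to verify is that the reductions of \cite{AA2}---in particular the passage from representations of $H(\mathbb{Z})$ through congruence completions, and the use of super-rigidity to upgrade local bounds to a global one---are insensitive to the specific value of $n$ used, so that the improved rational-singularity threshold propagates cleanly to the exponent $C$ without introducing any additional constraint beyond those already in the hypotheses of Theorem~\ref{App1}.
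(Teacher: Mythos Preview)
Your proposal is correct and matches the paper's approach: the paper does not give an independent proof of Theorem~\ref{App1} but simply states it as the strengthening of \cite[Theorem~B]{AA2} obtained by substituting the improved rational-singularity threshold of Theorem~\ref{thmR} into that paper's framework. Your sketch spells out precisely this substitution, which is all that is intended.
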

\subsection{Ideas of The Proofs}
Since our variety has a geometric interpretation, it has nice projections to varieties of the same kind (by discarding one of the vectors). For some nice part of our variety, the fibers are irreducible and of constant dimension. These projections maps allow us to calculate the dimension and irreducibility of this part using elementary algebraic geometry tools. Then we bound the remaining part of the variety and when it's small enough we extend our result to the whole variety.

Then we derive our equations, and get a combinatorial description for the singular points of the projective version. Using this, we understand when is it smooth.

Finally, when the graph is a tree we get an explicit resolution of singularities. Using a calculation of the canonical bundle and the Kodaira vanishing theorem we show that $X(G,W)$ has rational singularities.

For the combinatorial part, the degeneration method used showed you can split the graph into certain sub graphs, and it's enough to show that they have rational singularities. However, you have to split the vector space in the process, so the more you split the graph, the bigger the vector space needs to be. It is shown directly that for an edge the singularities are rational, so by splitting the graph to disjoint edges we can get a bound for when the singularities are rational. The proof uses repeated splittings of the graph, but it turns out that doing this in one big splitting with lots of colors is more effective.
\subsection{Structure of the paper}
In section \ref{sec:2} we give a number of preliminaries, definitions and notions we will use.

In \ref{subsec:2.1} we define the graph variety and the expected dimension.

In \ref{subsec:2.2} we recall some definitions and theorems regarding the canonical bundle.

In \ref{subsec:2.3} we recall the definition of the higher push-forward map and the definition of rational singularities.

In \ref{subsec:2.4} we give some notions we will use for graphs.

In \ref{sec:3} we study the graph variety. 

In \ref{subsec:3.1} we prove \hyperref[thmA]{Theorem A}, by first showing it for a certain part called the $>$-regular part, where $>$ is an order on the vertices, and then extend to the whole variety.

In \ref{subsec:3.2} we understand the smooth points of the variety to some extent, and prove \hyperref[thmB]{Theorem B}.

In \ref{subsec:3.3} we calculate the canonical bundle for the projective variety when the graph is a tree.

In \ref{sec:4} we prove the results about the rational singularities for trees.
In \ref{subsec:4.1} we prove \hyperref[thmC]{Theorem C}.

In \ref{subsec:4.2} we prove the combinatorial result i.e. \hyperref[thmD]{Theorem D}.

\subsection{Acknowledgments}

I would like to thank my advisor \textbf{Avraham Aizenbud}, for suggesting this problem, helping me along the way and teaching me mathematics. 

I would also like to thank \textbf{Itay Glazer} for providing helpful comments on this paper.

I would like to thank my friends who studied with me and motivated me, among which are \textbf{Daniel Kaner},  \textbf{Dor Mezer}, \textbf{Guy Shtotland}, \textbf{Lev Radzivilovsky} and \textbf{Yaron Brodsky}.

In addition I would like to thank \textbf{Nathan Keller} and \textbf{Uzi Vishne}, for supporting me in the early days of my studies.

I also deeply thank \textbf{Yaron Brodsky} for helping proof-read this paper, both grammatically and mathematically.

Finally I would like to thank \textbf{Lev Radzivilovsky} and \textbf{Shachar Carmeli} for teaching me a huge amount of mathematics. 
\section{Preliminaries}
\label{sec:2}
\subsection{The Graph Variety}
\label{subsec:2.1}
In most cases, the graph variety arises naturally with a symplectic space. However, almost all our proofs will work for symmetric non-degenerate bi-linear forms as well, and we will only mention if the form is symmetric or anti-symmetric if there is a difference.
\begin{convention}
	A vector space $W$ will always have a non-degenerate bi-linear form on it, either symmetric or anti-symmetric.
	\end{convention}  
\begin{definition}
	For a graph $G=(V,E)$ and vector a space $(W,\left<\cdot,\cdot\right>)$, $X(G,W)$ is the sub-variety of $W^{V}$ defined by the equations $\left< w(v),w(u)\right>=0$ for $\{v,u\}\in E$.
\end{definition}
We will often denote a point of $X(G,W)$ by $w$ and think of it as a function from $V$ to $W$, i.e., for a vertex $v$, $w(v)$ is the vector assigned to $v$, and $w(V)$ is the set of all vectors assigned to any vertex.

Since $X(G,W)$ is cut from $W^{V}$ by $\left | E \right |$ equation, we except the dimension to be $\left| V\right| \cdot \dim(W)-\left| E\right|$, and we define:
\begin{definition}
\label{def1}
	The expected dimension of $X(G,W)$ is $d(G,W)=\left| V\right| \cdot \dim(W)-\left| E\right|$. 
\end{definition}
We also consider the projective version of this scheme.
\begin{definition}
	For a graph $G=(V,E)$ and vector a space $(W,\left<\cdot,\cdot\right>)$, $\PX{G}{W}$ is the sub-scheme of $\mathbb{P}(W)^{V}$ defined by the equations $\left< w(v),w(u)\right>=0$ for $\{v,u\}\in E$.
\end{definition}
\subsection{Canonical Bundle}
\label{subsec:2.2}
We recall some definition and theorems about the canonical bundle.
\begin{definition}
	For a $k$-algebra $A$ we define $\Omega (A)$ to the $A$ module generated by $df$ for $f\in A$, satisfying the relations: $$d(f+g)=df+dg$$ $$dfg=fdg+gdf$$ $$d\alpha f=\alpha df$$ For $f,g \in A$, $\alpha \in k$.  
	The co-tangent bundle of a smooth variety $X$ is the sheaf satisfying $\Omega_{X}(O)=\Omega (\mathcal{O}_{X}(O))$ for any affine open subset $O$.
	To construct this explicitly we look at the diagonal embedding $\Delta:X\rightarrow X\times _{k} X$ and if $\mathcal{I}$ is the ideal sheaf of the diagonal, then: $$ \Omega_{X}=\Delta^{*}(\mathcal{I}/\mathcal{I}^{2})$$
\end{definition}
\begin{definition}
	The canonical bundle of a smooth variety $X$, denoted $\omega_{X}$, is the top exterior product of the co-tangent bundle.
\end{definition}
We recall the canonical bundle of some basic varieties.
\begin{example}
	The canonical bundle of $X=\mathbb{P}^{n}$ is $\mathcal{O}_{X}(-n-1)$.
\end{example}
\begin{example}
	If $X,Y$ are smooth varieties and $p_{1},p_{2}$ are the projections from $X\times Y$ to $X$ and $Y$ respectfully then the canonical bundle of $X\times Y$ is given by: $$\omega_{X\times Y} = p_{1}^{*} \omega_{X} \otimes p_{2}^{*} \omega_{Y}$$ 
\end{example}
Finally we write the adjunction formula.
\begin{theorem}
	If $X$ is a smooth variety, $D$ a smooth divisor, and $i:D\rightarrow X$ the inclusion, then the canonical bundle of $D$ is given by: $$\omega_{D}=i^{*}(\omega_{X}\otimes \mathcal{O}(D))$$  
\end{theorem}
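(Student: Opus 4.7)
The plan is to derive the adjunction formula from the conormal exact sequence together with the identification of the conormal bundle of a smooth divisor. The key input is the conormal (or ``second fundamental'') exact sequence for the closed embedding $i:D\hookrightarrow X$ with ideal sheaf $\mathcal{I}$:
\[ 0 \to \mathcal{I}/\mathcal{I}^2 \to i^*\Omega_X \to \Omega_D \to 0. \]
This is a short exact sequence of locally free $\mathcal{O}_D$-modules. Surjectivity on the right is immediate from the universal property of $\Omega$; injectivity on the left is the substantive point and uses that both $X$ and $D$ are smooth. Concretely, near any point of $D$ one can find local coordinates $x_1,\dots,x_n$ on $X$ such that $D$ is cut out by $x_n=0$, and then the map sends the class of $x_n$ to $dx_n$, which is part of a free basis of $i^*\Omega_X$, forcing injectivity.

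Next I would identify $\mathcal{I}/\mathcal{I}^2$ with the line bundle $i^*\mathcal{O}_X(-D)$. Since $D$ is a Cartier divisor, its ideal sheaf is invertible with $\mathcal{I}\cong \mathcal{O}_X(-D)$, so tensoring with $\mathcal{O}_X/\mathcal{I}=i_*\mathcal{O}_D$ yields $\mathcal{I}/\mathcal{I}^2\cong i^*\mathcal{O}_X(-D)$. Now taking top exterior powers in the conormal sequence and using the standard fact that the determinant is multiplicative on short exact sequences of locally free sheaves (i.e.\ $\det \mathcal{F}\cong \det \mathcal{F}'\otimes \det \mathcal{F}''$ for $0\to \mathcal{F}'\to \mathcal{F}\to \mathcal{F}''\to 0$) gives:
\[ i^*\omega_X \cong i^*\mathcal{O}_X(-D)\otimes \omega_D. \]
Tensoring both sides by $i^*\mathcal{O}_X(D)$ yields $\omega_D \cong i^*(\omega_X\otimes \mathcal{O}(D))$, which is the adjunction formula.

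The only genuinely nontrivial step is the left exactness of the conormal sequence; without smoothness of both $X$ and $D$ the map $\mathcal{I}/\mathcal{I}^2\to i^*\Omega_X$ need not be injective, so this is the essential place where the hypotheses enter. The remaining ingredients -- invertibility of the ideal sheaf of a Cartier divisor and multiplicativity of $\det$ on short exact sequences -- are standard bookkeeping, so the main obstacle is really just checking that local smoothness produces the required splitting, which as noted follows directly from the existence of regular coordinate systems adapted to $D$.
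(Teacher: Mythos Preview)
Your proof is correct and is the standard argument for the adjunction formula. Note, however, that the paper does not actually prove this statement: it appears in the preliminaries (Section~\ref{subsec:2.2}) as a recalled fact, with no proof given. So there is nothing to compare against; your argument via the conormal exact sequence $0\to \mathcal{I}/\mathcal{I}^2\to i^*\Omega_X\to \Omega_D\to 0$, the identification $\mathcal{I}\cong\mathcal{O}_X(-D)$, and multiplicativity of determinants is exactly the textbook proof and would serve perfectly well as a supplement to the paper's exposition.
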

\subsection{Rational Singularities and $R^{q}f_{*}$}
\label{subsec:2.3}
We recall some definitions and properties. During this subsection $f:X\rightarrow Y$ is a morphism of schemes.
\begin{definition}
	The push-forward morphism is a functor between the category of $\mathcal{O}_{X}$-sheaves on $X$ to $\mathcal{O}_{Y}$-sheaves on $Y$, and furthermore it is left exact. 
	We define $R^{q}f_{*}$ as its right derived functor. It follows that $R^{0}f_{*}=f_{*}$.
\end{definition} 
\begin{example}
	If $f:X\rightarrow \spec{k}$ is a $k$-scheme, then $f_{*}$ is just the global sections functor, and therefore $R^{q}f_{*}=H^{q}$ is the usual cohomology.
\end{example}
\begin{example}
	If $X=\spec(A)$ is affine, where $A$ is a $k$-algebra, and $f:X\rightarrow \spec{k}$ is the corresponding map, then on quasi-coherent sheaves $f_{*}$ is an equivalence between $\mathcal{O}_{X}$-sheaves and $A$ modules, and therefore exact, so $R^{q}f_{*}=0$ for $q>0$ and quasi-coherent sheaves. 
\end{example}
\begin{example}
    More generally, if $f:X \to Y$ is a map between affine schemes, then $f_{*}$ is exact on quasi-coherent sheaves.
    
\end{example}
\begin{example}
	If $f:X\rightarrow Y$ is an affine map then, since the inverse image of an affine subset is affine, $f_{*}$ is exact on an affine cover of $Y$, and therefore $R^{q}f_{*}=0$ for $q>0$ on quasi-coherent sheaves.
\end{example}
We also note an obvious claim:
\begin{claim}
	If $f:X\rightarrow Y,g:Y\rightarrow Z$ are morphisms and $f_{*}$ is exact, then $R^{q}(gf)_{*}=(R^{q}g_{*})f_{*}$
\end{claim}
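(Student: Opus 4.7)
The plan is to prove the identity by exhibiting a resolution that simultaneously computes both sides, rather than going through the full Grothendieck spectral sequence. Start with an injective resolution $\mathcal{F}\to \mathcal{I}^{\bullet}$ in the appropriate category of $\mathcal{O}_{X}$-modules. Then by the very definition of the right derived functor applied to $(gf)_{*}$, and using the equality $(gf)_{*}=g_{*}f_{*}$ on the underlying sheaves,
$$R^{q}(gf)_{*}\mathcal{F}=H^{q}\bigl((gf)_{*}\mathcal{I}^{\bullet}\bigr)=H^{q}\bigl(g_{*}f_{*}\mathcal{I}^{\bullet}\bigr).$$

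The key structural observation is that $f_{*}$ preserves injectives. This is the standard fact that any right adjoint whose left adjoint is exact sends injectives to injectives, applied to the adjunction $f^{-1}\dashv f_{*}$ (respectively $f^{*}\dashv f_{*}$ for $\mathcal{O}$-modules in the settings relevant to the paper); the inverse image functor is exact for sheaves of abelian groups. Once we know each $f_{*}\mathcal{I}^{k}$ is injective, it is in particular $g_{*}$-acyclic. On the other hand, the hypothesis that $f_{*}$ is itself exact guarantees that the complex $f_{*}\mathcal{I}^{\bullet}$ is still a resolution, now of $f_{*}\mathcal{F}$. So $f_{*}\mathcal{I}^{\bullet}$ is a $g_{*}$-acyclic resolution of $f_{*}\mathcal{F}$, which means it computes the right derived functors of $g_{*}$, giving
$$H^{q}\bigl(g_{*}f_{*}\mathcal{I}^{\bullet}\bigr)=R^{q}g_{*}(f_{*}\mathcal{F}).$$
Chaining the two displays yields the claim.

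The only delicate point is the preservation of injectives in whatever specific category the later applications require (quasi-coherent sheaves on Noetherian schemes, or general $\mathcal{O}_{X}$-modules), but in each of those standard settings the exactness of $f^{-1}$ or $f^{*}$ is well known, so this step is essentially bookkeeping. An equivalent and equally short route is to invoke the Grothendieck composition spectral sequence $E_{2}^{p,q}=R^{p}g_{*}\,R^{q}f_{*}\mathcal{F}\Rightarrow R^{p+q}(gf)_{*}\mathcal{F}$ and observe that exactness of $f_{*}$ forces $R^{q}f_{*}=0$ for $q>0$, so the spectral sequence collapses on the $q=0$ row and yields the stated equality; this is really the same proof packaged differently.
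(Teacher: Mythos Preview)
The paper does not actually give a proof of this claim; it is stated as ``obvious'' and left without argument. Your write-up is therefore more than what the paper supplies, and the overall strategy (take an injective resolution of $\mathcal{F}$, push it forward by $f_{*}$ to get a $g_{*}$-acyclic resolution of $f_{*}\mathcal{F}$, then read off cohomology) is the standard and correct one.

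One small correction to your justification: you assert that $f^{*}$ is exact on $\mathcal{O}$-modules, but this is false in general (tensor product is only right exact). The clean way to get what you need in the $\mathcal{O}_{X}$-module setting is not via the adjunction but via flasqueness: injective $\mathcal{O}_{X}$-modules are flasque, $f_{*}$ of a flasque sheaf is flasque, and flasque sheaves are acyclic for any push-forward. This gives directly that $f_{*}\mathcal{I}^{\bullet}$ is a $g_{*}$-acyclic resolution of $f_{*}\mathcal{F}$, and the rest of your argument goes through unchanged. The same remark applies to your spectral-sequence alternative: the existence of the Grothendieck spectral sequence already needs $f_{*}$ to send injectives to $g_{*}$-acyclics, and again flasqueness is the right reason.
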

Finally we give a definition of rational singularities and state the Kodaira vanishing theorem:
\begin{definition}
	A scheme $X$ of finite type over a field of characteristic $0$ has rational singularities if it is normal and there exists a smooth scheme $Y$ and a proper bi-rational map $f:Y\rightarrow X$ s.t. $R^{q}f_{*}(\mathcal{O}_{Y})=0$ for $q>0$. 
\end{definition}  

\begin{theorem}
	(Kodaira Vanishing Theorem): Let $X$ be a smooth projective scheme over a field of characteristic zero, $\omega_{X}$ the canonical bundle, and $\mathcal{L}$ an ample line bundle, then: $$H^{q}(X,\omega_{X}\otimes \mathcal{L})=0$$
	For $q>0$.
\end{theorem}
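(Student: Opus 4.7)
This is the classical Kodaira Vanishing Theorem, and I would prove it via Hodge theory on compact K\"ahler manifolds. First, by the Lefschetz principle, I reduce to the case where the base field is $\mathbb{C}$: the data $(X,\mathcal{L})$ is definable over a finitely generated subfield of the given field, which embeds into $\mathbb{C}$, and cohomological vanishing is preserved by base change along field extensions. So I may assume $X$ is a smooth complex projective variety, and hence a compact K\"ahler manifold. Since $\mathcal{L}$ is ample, by the Kodaira embedding theorem (or by pulling back the Fubini-Study metric along an embedding furnished by a power of $\mathcal{L}$), the bundle $\mathcal{L}$ admits a smooth Hermitian metric whose Chern curvature form $\Theta$ is positive. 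A class in $H^{q}(X,\omega_{X}\otimes \mathcal{L})$ is a Dolbeault class of $\mathcal{L}$-valued $(n,q)$-forms with $n=\dim X$, and by Hodge theory for bundle-valued forms on a compact K\"ahler manifold it has a unique harmonic representative $\alpha$ with $\Delta_{\bar\partial}\alpha=0$.

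The heart of the argument is the Kodaira-Nakano identity
$$ \Delta_{\bar\partial} - \Delta_{\partial} = [i\Theta,\Lambda], $$
where $\Lambda$ is the adjoint of wedging with the K\"ahler form. Pairing with $\alpha$ and using $\Delta_{\bar\partial}\alpha=0$ gives
$$ 0 = \|\partial\alpha\|^{2} + \|\partial^{*}\alpha\|^{2} + \langle [i\Theta,\Lambda]\alpha,\alpha\rangle. $$
For forms of bidegree $(n,q)$ with $q>0$, the Nakano positivity that follows from $\Theta>0$ makes $\langle[i\Theta,\Lambda]\alpha,\alpha\rangle$ strictly positive unless $\alpha=0$, while the first two terms are non-negative. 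Therefore $\alpha=0$, which gives the desired vanishing.

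The main obstacle is establishing the Kodaira-Nakano identity together with the strict positivity of $[i\Theta,\Lambda]$ on bundle-valued $(n,q)$-forms for $q>0$. Both require careful bookkeeping of commutators among $\partial$, $\bar\partial$, the Lefschetz operator $L$, and its adjoint $\Lambda$, while tracking how the curvature of the chosen Chern connection on $\mathcal{L}$ enters. A purely algebraic alternative, more in the spirit of this paper, is the Deligne-Illusie proof: lift $X$ and $\mathcal{L}$ to characteristic $p$, use the splitting of the Frobenius pushforward of the de Rham complex to obtain the Hodge-to-de Rham degeneration, and deduce characteristic-zero vanishing by spreading out and specialising to sufficiently large $p$. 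That route trades the analytic subtleties of Nakano positivity for delicate arithmetic input.
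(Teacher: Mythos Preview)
The paper does not prove this statement at all: it is stated in the preliminaries (Section~\ref{subsec:2.3}) as a known theorem, without proof, and is later invoked as a black box in Section~\ref{subsec:4.1}. So there is no ``paper's own proof'' to compare against.

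That said, your sketch is the standard analytic proof and is correct in outline. The reduction to $\mathbb{C}$ via the Lefschetz principle is fine; the identification of $H^{q}(X,\omega_X\otimes\mathcal{L})$ with harmonic $\mathcal{L}$-valued $(n,q)$-forms is the Hodge theorem for bundle-valued forms; the Kodaira--Nakano identity and the positivity computation for $[i\Theta,\Lambda]$ on $(n,q)$-forms with $q>0$ are exactly the ingredients one needs. One small sign remark: the usual form of the identity is $\Delta_{\bar\partial}=\Delta_{\partial}+[i\Theta,\Lambda]$, so pairing with a $\bar\partial$-harmonic $\alpha$ gives
\[
0=\|\partial\alpha\|^{2}+\|\partial^{*}\alpha\|^{2}+\langle[i\Theta,\Lambda]\alpha,\alpha\rangle,
\]
as you wrote; just be careful with the sign convention when you write the identity itself. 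Your mention of the Deligne--Illusie approach as an alternative is also apt. Either route would be acceptable, but since the paper treats this as a cited background result, no proof is expected here.
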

\subsection{Some Graph Defintions}
\label{subsec:2.4}
While for the definition of graph variety we only need a graph and a symplectic space, it will be convenient for the proof to order the vertices of the graph. In practice any order can be given, but different orders may yield better results.
\begin{definition}
	An ordered graph $G=(V,E,>)$ is a graph with a linear ordering $>$ on its vertices.
\end{definition}
We will usually assume $V=\{v_{1}<v_{2}<...<v_{r}\}$. For a graph and a vertex $v$, $N(v)$ is the set of neighbors of $v$, and similarly for ordered graphs we have an analogue:
\begin{definition}
	Let $G=(V,E,>)$ be an ordered graph, $v \in V$ a vertex. The set of the older neighbors of $v$ is $N_{>}(v):=\{u\in V \mid u>v \ and \ \{v,u\}\in E \}$.
\end{definition}
\begin{definition}
	An ordered graph is said to be $d$-degenerate if $\left| N_{>}(v)\right| \leq d$ for all vertices $v$. A graph is said to be $d$-degenerate if its vertices can be ordered in a way that makes it a $d$-degenerate ordered graph.
\end{definition}
In other words, ordered graphs are built by adding one vertex at a time, and are $d$-degenerate if every time we add a vertex we connect him to at most $d$ already existing vertices.
\begin{example}
	Since any tree has at least one leaf, it's easy to see that a tree can be ordered as a $1$-degenerate graph. On the other hand a connected $1$-degenerate is obviously a tree.
\end{example}
\begin{definition}
	If $G=(V,E)$ is a graph, and $U \subseteq V$ is a subset, then $G\mid_{U}$ is the induced graph on $U$. 
\end{definition}

\section{Basic Properties of $X(G,W)$ And $\PX{G}{W}$}
\label{sec:3}
For the rest of the section let $G = (V,E)$ be a graph, $(W,\left < , \right >)$ be a vector space, and let $X(G,W)$ ($\PX{G}{W}$) be the corresponding (projective) graph variety. 
\subsection{Dimension And Irreducibility}
\label{subsec:3.1}
Our first goal is to give a sufficient condition on $(G,W)$ s.t. $X(G,W)$ will be irreducible and will have the expected dimension (see definition \hyperref[def1]{\ref*{def1}}).

It is possible to show that you can degenerate one graph variety to another one with the same graph but with a smaller vector space. This implies that as the dimension of $W$ grows, the singularities of $X(G,W)$ become nicer. With this in mind we give the following theorem we shall prove in this section.
\begin{theoremA}
\label{thmA}
	Let $G=(V,E)$ be a a $d$-degenerate graph with maximal degree $D$, and $(W,\left<\cdot,\cdot\right>)$ a vector space. If $\dim(W)\geq d+D-1$. Then $X(G,W)$ has dimension $d(G,W)$. If $\dim(W) > d + D - 1$ then $X(G,W)$ is irreducible. 
\end{theoremA}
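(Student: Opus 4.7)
The plan is to fix a $d$-degenerate ordering $v_{1}<\cdots <v_{n}$ of $V$ and induct on $n=|V|$, the base case $n=0$ being vacuous. Writing $G' := G|_{V\setminus\{v_{1}\}}$ and $k:=|N_{>}(v_{1})|=|N(v_{1})|\leq d$, the graph $G'$ is still $d$-degenerate with maximal degree $\leq D$, so the inductive hypothesis applies. The key tool is the projection $\pi\colon X(G,W)\to X(G',W)$ that forgets $w(v_{1})$. For $w'\in X(G',W)$, the fiber $\pi^{-1}(w')$ is the linear subspace of $W$ orthogonal to $\{w'(u):u\in N(v_{1})\}$, hence of dimension $\dim W-r(w')$, where $r(w'):=\dim\operatorname{span}\{w'(u):u\in N(v_{1})\}\leq k$.

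I would stratify $X(G',W)=\bigsqcup_{r=0}^{k}Y_{r}$ by the value $r(w')=r$, so that $X(G,W)=\bigsqcup_{r}\pi^{-1}(Y_{r})$. Over the open stratum $Y_{k}$, where the $k$ vectors are linearly independent, $\pi$ restricts to a vector bundle of rank $\dim W-k$. By induction $X(G',W)$ is irreducible of dimension $d(G',W)$, so once $Y_{k}$ is shown to be nonempty it is dense, and $\pi^{-1}(Y_{k})$ is irreducible of dimension $d(G',W)+\dim W-k=d(G,W)$. Nonemptiness of $Y_{k}$ would follow by an explicit construction: any linearly independent tuple in $X(G|_{N(v_{1})},W)$ extends to a point of $X(G',W)$ by setting all remaining coordinates to zero, and producing such tuples is a small inductive exercise using $\dim W\geq d$.

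Everything then reduces to bounding $\dim\pi^{-1}(Y_{r})=\dim Y_{r}+\dim W-r$ for $r<k$: the dimension claim requires $\dim Y_{r}\leq d(G',W)-(k-r)$, with strict inequality for the irreducibility claim, i.e.\ that $Y_{r}$ has codimension at least $k-r$ (respectively strictly greater than $k-r$) in $X(G',W)$. To estimate this codimension I would compose with the restriction $\sigma\colon X(G',W)\to X(G|_{N(v_{1})},W)\subset W^{N(v_{1})}$, which is surjective (again by extension by zero), and write $Y_{r}=\sigma^{-1}(\overline{W}_{r}\cap X(G|_{N(v_{1})},W))$, where $\overline{W}_{r}\subset W^{N(v_{1})}$ is the classical determinantal variety of tuples of rank $\leq r$, of codimension $(k-r)(\dim W-r)$.

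The main obstacle is that $\sigma$ is far from flat: for each vertex $v\in V\setminus(\{v_{1}\}\cup N(v_{1}))$ adjacent to some $u\in N(v_{1})$, the linear equation $\langle z_{u},w'(v)\rangle=0$ degenerates over the low-rank locus, so the fibers of $\sigma$ can jump in dimension by as much as $\sum_{u\in N(v_{1})}(\deg(u)-1)\leq k(D-1)$. The crux is to balance the codimension gain $(k-r)(\dim W-r)$ from the determinantal condition against this fiber jump, and this balance is precisely what forces the numerical threshold $\dim W\geq d+D-1$, with strict inequality delivering the irreducibility half. Once this codimension bound is in place, $\bigcup_{r<k}\pi^{-1}(Y_{r})$ is strictly smaller dimensional than $\pi^{-1}(Y_{k})$, so $X(G,W)$ has the claimed dimension and inherits irreducibility from $\pi^{-1}(Y_{k})$, completing the induction.
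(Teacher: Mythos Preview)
Your overall plan --- induct on $|V|$, fibre $X(G,W)$ over $X(G',W)$ by forgetting $v_1$, and stratify the base by the rank of the neighbour-vectors --- is essentially the paper's strategy, just organised slightly differently. The paper first isolates the full $>$-regular open set $U_1$ (where \emph{every} older-neighbour set is independent) and shows it is irreducible of dimension $d(G,W)$ by the same chain of affine-bundle projections you describe; then it bounds the complement. Your version folds the two steps into a single induction, which is fine.

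The genuine gap is in your ``crux'' paragraph. The heuristic ``codimension gain $(k-r)(\dim W-r)$ minus fibre jump $k(D-1)$'' does not produce the threshold $d+D-1$. The codimension $(k-r)(\dim W-r)$ is taken in $W^{k}$, not in $X(G',W)$, and you have no control on $\dim\bigl(\overline W_r\cap\mathrm{Im}\,\sigma\bigr)$ without transversality; worse, even granting the codimension, balancing it against your uniform jump bound $k(D-1)$ gives (at $r=k-1$) $\dim W\geq kD$, far from sharp. The quantity $k(D-1)$ is the wrong bound: only $k-r$ of the constraints genuinely degenerate, so the jump should scale like $(k-r)(D-1)$, but establishing this rigorously via $\sigma$ is awkward.

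The clean fix is exactly what the paper does, and it slots directly into your framework. For $w'\in Y_r$ with $r<k$, pick $u_0\in N(v_1)$ with $w'(u_0)$ in the span of the remaining $r$ independent ones, and project $X(G',W)\to X(G'\setminus u_0,W)$. By induction this target has dimension $d(G'\setminus u_0,W)$, and the fibre is at most $r$-dimensional; since $\deg_{G'}(u_0)\leq D-1$ (the edge to $v_1$ is gone), you get
\[
\dim Y_r\;\leq\;r+d(G',W)-\dim W+(D-1).
\]
Requiring $\dim Y_r\le d(G',W)-(k-r)$ then reads $\dim W\ge k+D-1$, and $k\le d$ finishes it. The paper runs this same projection argument on $G$ rather than $G'$, removing an arbitrary vertex whose vector lies in the span of at most $d-1$ others; the arithmetic is identical.
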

The proof will be split into two parts, starting with showing the result for a somewhat regular part of the variety, and then showing that for $\dim(W)$ large enough this part is indeed most of the variety.
\subsubsection{The $>$-Regular Part}
Intuitively the variety structure is quite simple - each vector is taken from some subspace dependent on its neighbors. The problem arises when those vectors are linearly dependent. We solve this by forcibly taking those points out.

\begin{definition}
	Let $G=(V,E,>)$ be an ordered graph with $V=\{v_{1}<v_{2}<...<v_{r}\}$, and $(W,\left<,\right>)$ a vector space. For $i\in \{1,2,...,r\}$, denote $A_{i}=\{v_{i},v_{i+1},...v_{r}\}$. We denote by  $U_{i}(G,W,>)$ the open subset of $X(G\mid_{A_{i}},W)$ s.t. for each $v\in V$ the set of vectors $w(N_{>}(v) \cap A_{i})$ is linearly independent. We call $U_1(G, W, >)$ the $>$-regular part of $X(G, W)$.
\end{definition}
A simple but important observation is that if the graph is $d-degenerate$ then the open conditions only states that some subsets of up to $d$ vectors are linearly independent, since $\left | N_{>}(v) \cap A_{i} \right | \leq \left | N_{>}(v) \right |\leq d$. Therefore if $d-1$ vectors from that set are already chosen, then we only ask that the $d$-th vector is in the complement of some $d-1$ dimensional vector space. 
\begin{claim}
\label{clm1}
	 Let $G=(V,E,>)$ be an ordered $d$-degenerate graph, and suppose that $\dim(W)\geq 2d$. Then $U_{i}(G,W,>)$ is non-empty, with dimension $d(G\mid_{A_{i}},W)$, and irreducible, for all $i\in \{1,2,...,r\}$.
\end{claim}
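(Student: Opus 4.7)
The plan is reverse induction on $i$, descending from $i = r$ to $i = 1$. For the base case $i = r$, the set $A_r = \{v_r\}$ and the induced graph $G\mid_{A_r}$ has no edges; for every $v \in V$ the intersection $N_>(v) \cap A_r$ has size at most one, so the linear-independence conditions are vacuous and $U_r = W$ is irreducible of dimension $\dim(W) = d(G\mid_{A_r}, W)$.

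For the inductive step I would study the restriction map $\pi_i : U_i \to U_{i+1}$ obtained by forgetting the vector assigned to $v_i$. This is well-defined because $N_>(v) \cap A_{i+1} \subseteq N_>(v) \cap A_i$, and linear independence passes to subsets. To analyze the fiber over a point $w' \in U_{i+1}$, note that the only graph equations involving $v_i$ force $x := w(v_i)$ to lie in $L := w'(N_>(v_i))^\perp$. Since $N_>(v_i) \subseteq A_{i+1}$ and $w' \in U_{i+1}$, the vectors $w'(N_>(v_i))$ are linearly independent, so non-degeneracy of the form yields $\dim L = \dim(W) - \left|N_>(v_i)\right|$. All other linear-independence conditions defining $U_i$ either coincide with those of $U_{i+1}$, or, for the vertices $v_j$ with $j < i$ adjacent to $v_i$, demand $x \notin L_j := \operatorname{span}(w'(N_>(v_j) \cap A_{i+1}))$.

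The main technical point, and the only place the hypothesis $\dim(W) \geq 2d$ is used, is verifying the fibers are non-empty and of constant dimension. For each such $j$, since $v_i$ itself lies in $N_>(v_j) \setminus A_{i+1}$, one has $\left|N_>(v_j) \cap A_{i+1}\right| \leq d - 1$, so $\dim L_j \leq d - 1$. On the other hand, $\dim L \geq \dim(W) - d \geq d$, so $\dim L > \dim L_j$ and $L \not\subseteq L_j$, whence $L \cap L_j$ is a proper subspace of $L$. Thus the fiber $L \setminus \bigcup_j (L \cap L_j)$ is a non-empty Zariski-open subset of the affine space $L$, irreducible of dimension $\dim(W) - \left|N_>(v_i)\right|$.

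To conclude, I would observe that $U_i$ sits as a Zariski-open subset of the total space of a sub-bundle $\mathcal{E} \subseteq U_{i+1} \times W$, namely the rank $\dim(W) - \left|N_>(v_i)\right|$ sub-bundle defined by the $\left|N_>(v_i)\right|$ linear equations coming from the edges incident to $v_i$ (which have constant rank on $U_{i+1}$ by the argument above). By the induction hypothesis $U_{i+1}$ is non-empty and irreducible, so $\mathcal{E}$ is irreducible of dimension $\dim U_{i+1} + \dim(W) - \left|N_>(v_i)\right|$, and the non-empty open subset $U_i \subseteq \mathcal{E}$ inherits both properties. The edge count $\left|E(G\mid_{A_i})\right| = \left|E(G\mid_{A_{i+1}})\right| + \left|N_>(v_i)\right|$ then gives $\dim U_i = d(G\mid_{A_i}, W)$, closing the induction.
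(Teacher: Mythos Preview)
Your argument is correct and follows the same inductive skeleton as the paper: reverse induction on $i$, analyzing the forgetful map $U_i \to U_{i+1}$ and showing its fibers are non-empty open subsets of affine spaces of the expected dimension. One small slip in the base case: you say the linear-independence conditions are vacuous because $|N_>(v)\cap A_r|\leq 1$, but a singleton $\{w(v_r)\}$ is linearly independent only when $w(v_r)\neq 0$; so $U_r = W\setminus\{0\}$ whenever $v_r$ has a neighbor, as the paper writes. This does not affect the conclusion.

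Where you genuinely diverge is in deducing irreducibility. The paper first notes that $U_i$ is a complete intersection and hence equidimensional, then runs a component-chasing argument: take a top-dimensional component $Z$, show $f_i(Z)$ contains a dense open of $U_{i+1}$, and derive a dimension contradiction if a second component existed. Your route --- realizing $U_i$ as a non-empty Zariski-open subset of the total space of a constant-rank sub-bundle $\mathcal{E}\subseteq U_{i+1}\times W$, and inheriting irreducibility directly from $U_{i+1}$ --- is more streamlined and bypasses the equidimensionality discussion entirely. The paper's detour does buy something, though: the complete-intersection/equidimensionality observation is reused verbatim in the proof of Theorem~A to pass from $U_1$ to all of $X(G,W)$, so in context it is not wasted effort.
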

The proof of the claim is by induction. The idea is that for $i\in \{1,2,...,r-1\}$  there is a natural morphism from $U_{i}(G,W,>)$ to $U_{i+1}(G,W,>)$ by forgetting $w(v_{i})$. The claim will follow by considering the fibers of these maps.   

\begin{lemma}
	Let $G = (V,E,>)$ and $W$ be as in claim 3.1.3. If $i \in \{1,2,...,r-1\}$, and $f_{i}:U_{i}(G,W,>)\rightarrow U_{i+1}(G,W,>)$ is the morphism defined by forgetting the vector $w(v_{i})$, then for every $w\in U_{i+1}(G,W,>)$ the fiber $f_{i}^{-1}(w)$ is irreducible and has dimension $\dim(W) -  \left| N_{>}(v_{i})\right|$
\end{lemma}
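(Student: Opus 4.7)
The plan is to describe $f_i^{-1}(w)$ concretely as a locus in $W$ parameterising the choice of the single vector $w'(v_i)$, and to show it is a non-empty Zariski open subset of an affine subspace of $W$.

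First I would separate the conditions defining $U_i$ into those already ensured by $w\in U_{i+1}$ and those genuinely new, i.e.\ involving $w'(v_i)$. The new bilinear equations are precisely $\langle w'(v_i),w(u)\rangle=0$ for every $u\in N_{>}(v_i)$, since these are exactly the edges of $G\mid_{A_i}$ incident to $v_i$, and $N_{>}(v_i)\subseteq A_{i+1}$ so $w(u)$ is already fixed. Because $w\in U_{i+1}$, the vectors $w(N_{>}(v_i))$ are linearly independent, and since the form is non-degenerate these $|N_{>}(v_i)|$ linear functionals on $W$ are independent, cutting out a linear subspace $V'\subseteq W$ of dimension $\dim(W)-|N_{>}(v_i)|$.

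Next I would analyse the open linear-independence conditions. For $v\in V$ the set $w'(N_{>}(v)\cap A_i)$ differs from $w(N_{>}(v)\cap A_{i+1})$ exactly when $v_i\in N_{>}(v)$, i.e.\ when $v<v_i$ and $\{v,v_i\}\in E$; in that case $N_{>}(v)\cap A_i=(N_{>}(v)\cap A_{i+1})\cup\{v_i\}$, and since $w(N_{>}(v)\cap A_{i+1})$ is already independent (from $w\in U_{i+1}$), the extra requirement is simply
\[
w'(v_i)\notin\operatorname{span}\bigl(w(N_{>}(v)\cap A_{i+1})\bigr).
\]
The case $v=v_i$ gives only the independence of $w(N_{>}(v_i))$, which again holds on $U_{i+1}$. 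Thus $f_i^{-1}(w)$ is the intersection of $V'$ with the complements of finitely many linear subspaces $L_v:=\operatorname{span}(w(N_{>}(v)\cap A_{i+1}))$ of $W$.

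Finally I would verify non-emptiness via a dimension count. Each $L_v$ has $\dim L_v=|N_{>}(v)|-1\le d-1$, while $\dim V'\ge \dim(W)-d\ge d$ by the hypothesis $\dim(W)\ge 2d$. Hence $V'\not\subseteq L_v$ for every relevant $v$, so $V'\cap L_v$ is a proper linear subspace of $V'$; since the ground field is infinite, $V'$ is not the union of finitely many proper subspaces, and the fiber is a non-empty Zariski open subset of the affine space $V'$, therefore irreducible of dimension $\dim(W)-|N_{>}(v_i)|$. The only real subtlety is the bookkeeping in the second step—carefully isolating which open conditions defining $U_i$ are new and which are inherited from $U_{i+1}$—after which the argument becomes elementary linear algebra, with the inequality $\dim(W)\ge 2d$ appearing exactly to guarantee non-emptiness of the fiber.
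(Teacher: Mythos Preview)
Your argument is correct and follows essentially the same route as the paper's proof: identify the fiber as an open subset of the linear subspace $V'=w(N_{>}(v_i))^{\perp}$ of dimension $\dim(W)-|N_{>}(v_i)|$, and use the bound $\dim V'\ge d>d-1$ to conclude that the finitely many subspaces coming from the open independence conditions cannot cover $V'$. Your version is slightly more explicit in the bookkeeping of which open conditions are inherited from $U_{i+1}$ and which are new, but the substance is identical.
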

\begin{proof}
	By definition, at the point $w$ the collection $w(N_{>}(v_{i}))$ is linearly independent.
	
	 We know that $w(v_{i})$ must be orthogonal to all these vectors, so each one gives a linear condition on $w(v_{i})$, and by assumption these conditions are independent. We conclude that $w(v_{i})$ must lie in a subspace $W'\subseteq W$ of co-dimension $  \left| N_{>}(v_{i})\right|$.
	 
	  The only other condition on $w(v_{i})$ is that he will be linearly independent with certain sets of vectors of size at most $d-1$, so there are a finite number of subspace of dimension at most $d-1$ s.t. $w(v)$ is in their complement. Since $\dim(W')=\dim(W)- \left| N_{>}(v_{i})\right| \geq 2\cdot d - d>d-1$ we get that these subspaces cannot cover $W'$, and therefore the fiber $f^{-1}(y)$ is a nonempty open subset of $W'$, and so irreducible and of dimension $\dim(W) -  \left| N_{>}(v_{i})\right|$
\end{proof}
This gives an obvious corollary.  
\begin{corollary}
	The dimension of $U_{i}(G,W,>)$ is $d(G\mid_{A_{i}},W)$.
\end{corollary}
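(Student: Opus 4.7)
The plan is to proceed by downward induction on $i$, running from $i = r$ down to $i = 1$, and to invoke the preceding lemma together with the standard fiber-dimension theorem at each step. The dimension formula $d(G\mid_{A_i}, W) = |A_i|\dim(W) - |E(G\mid_{A_i})|$ should be built up one vertex at a time, matching the recursion produced by the maps $f_i$.

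For the base case $i = r$, the set $A_r = \{v_r\}$, so $G\mid_{A_r}$ has no edges and $X(G\mid_{A_r}, W) = W$; moreover $U_r(G,W,>)$ is obtained from $W$ only by the open condition that $w(v_r)$ is linearly independent (possibly as the one-element set $\{w(v_r)\}$), which is automatic or cuts out at most the origin. In either case $\dim U_r(G,W,>) = \dim(W) = d(G\mid_{A_r}, W)$.

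For the inductive step, assume the formula holds for $i+1$. The lemma shows that $f_i \colon U_i(G,W,>) \to U_{i+1}(G,W,>)$ is surjective (the fiber over any point is non-empty) with all fibers irreducible of the same dimension $\dim(W) - |N_>(v_i)|$. The fiber-dimension theorem then gives
\begin{equation*}
\dim U_i(G,W,>) = \dim U_{i+1}(G,W,>) + \dim(W) - |N_>(v_i)|.
\end{equation*}
Substituting the inductive hypothesis yields $\dim U_i(G,W,>) = |A_{i+1}|\dim(W) - |E(G\mid_{A_{i+1}})| + \dim(W) - |N_>(v_i)|$.

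To finish, I need the combinatorial identity $|E(G\mid_{A_i})| = |E(G\mid_{A_{i+1}})| + |N_>(v_i)|$, which holds because the edges of $G\mid_{A_i}$ that are not in $G\mid_{A_{i+1}}$ are exactly those incident to $v_i$ inside $A_i$, and these are in bijection with $N_>(v_i)$ (since every neighbor of $v_i$ in $A_i \setminus \{v_i\} = A_{i+1}$ is by definition older than $v_i$). Combining this with $|A_i| = |A_{i+1}|+1$ gives $\dim U_i(G,W,>) = |A_i|\dim(W) - |E(G\mid_{A_i})| = d(G\mid_{A_i},W)$, completing the induction. There is no substantial obstacle here; the proof is pure bookkeeping, and the only minor subtlety is making sure $f_i$ is surjective so that the fiber-dimension formula applies globally, which the lemma already guarantees.
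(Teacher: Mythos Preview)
Your proof is correct and follows essentially the same downward induction as the paper: the paper also starts from $U_r(G,W,>)=W\setminus\{0\}$ and then uses the lemma's constant fiber dimension $\dim(W)-|N_>(v_i)|$ to step from $i+1$ to $i$. You simply make the bookkeeping identity $|E(G\mid_{A_i})|=|E(G\mid_{A_{i+1}})|+|N_>(v_i)|$ explicit, which the paper absorbs into the single equality $\dim(W)-|N_>(v_i)|+d(G\mid_{A_{i+1}},W)=d(G\mid_{A_i},W)$.
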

\begin{proof}
	If $i=r$, then obviously $U_{r}(G,W,>)=W\setminus\{0\}$, and so has dimension $\dim(W)$ which is the expected dimension.
	
	Continuing by induction, if $i\in \{1,2,...,r-1\}$, and the result is proved for $i+1$, then since all fibers of $f:U_{i}(G,W,>)\rightarrow U_{i+1}(G,W,>)$ have dimension $ \dim(W) -  \left| N_{>}(v_{i})\right|$ we have: $$\dim(U_{i}(G,W,>))=\dim(W) -  \left| N_{>}(v_{i})\right|+d(G\mid_{A_{i+1}},W)=d(G\mid_{A_{i}},W)$$
\end{proof}
Recall that saying that a variety is a complete intersection is saying that its co-dimension is the size of some set of equations defining it. Note that the corollary implies that $U_i(G, W, >)$ is a complete intersection.

When we intersect some algebraic set $X$ with the zero locus of a polynomial $V$, then for each irreducible component $Z$ of $X$ either $Z\subseteq V$, $Z\cap V = \varnothing$, or $Z \cap V$ is a union of a finite number of irreducible components with co-dimension one in $Z$.
If a variety $X$ is a complete intersection, and we add equations defining it one by one, then if at some point an irreducible component remains as a whole, then when we add some other equation it must vanish, since otherwise its dimension will be too large in the end. As a corollary, all irreducible components of $X$ will have the same dimension. 
Since in our case we showed that $U_{i}(G,W,>)$ is a complete intersection, this is true for it as well. 
Now claim \ref{clm1} will follow easily.
\begin{proof}
	We already saw that if $i=r$ then $U_{r}(G,W,>)=W\setminus \{0\}$ is irreducible.
	Now let $i\in \{1,2,...,r-1\}$, and assume by induction that $U_{i+1}(G,W,>)$ is irreducible. We consider the same morphism $f_{i}:U_{i}(G,W,>)\rightarrow U_{i+1}(G,W,>)$ as before, and prove that $U_{i}(G,W,>)$ is irreducible.
	
	Let $Z$ be an irreducible component of $U_{i}(G,W,>)$ with maximal dimension, and look at $f_{i}|_{Z}:Z\rightarrow U_{i+1}(G,W,>)$. Since $$\dim(Z)=\dim(W) -  \left| N_{>}(v_{i})\right|+\dim(U_{i+1}(G,W,>)),$$ every non-empty fiber of this map must have dimension at least $ \dim(W) -  \left| N_{>}(v_{i})\right|$, on the other hand since they are contained in the fibers of $f$, their dimension is at most $\dim(W) -  \left| N_{>}(v_{i})\right|$, and so it is always equal $\dim(W) -  \left| N_{>}(v_{i})\right|$. Together with the fact that the fibers are irreducible and $Z$ is closed, this shows that if a fiber of $f$ intersects $Z$ then he is contained in it.
	
	Now notice that $\overline{f(Z)}=U_{i+1}(G,W,>)$ since if it were smaller $Z$ would obviously have smaller dimension. By Chevalley's theorem this implies $f(Z)$ contains an open set $U$.
	
	Now let $Z'$ be another irreducible component of $U_{i}(G,W,>)$ (if exists). For any point $z\in Z'$, either $f(z)\in U^{c}$ or $f(z)\in U$. In the second case, the argument before shows that $z\in Z$ so that $z\in Z\cap Z'$.
	
	Putting this two together we get $Z'\subseteq f^{-1}(U^{c})\cup (Z\cap Z')$, this however obviously has dimension less then the dimension of $Z$, and therefore so does $Z'$.
	
	This however contradicts the equidimensionality of  $U_{i}(G,W,>)$.

	In conclusion there can only be one irreducible component of $U_{i}(G,W,>)$ so we are done. 
\end{proof} 
\subsubsection{Proof of Theorem A}
To deduce \hyperref[thmA]{Theorem A} from claim \ref{clm1} all we have to show is that the complement of $U_{1}(G,W,>)$ in $X(G,W)$ has dimension at most $d(G,W)$. This will imply that $\dim(X(G,W)) = d(G,W)$. Furthermore if we show that the dimension of the complement is less then $d(G,W)$ then $X(G,W)$ will be irreducible, since the same argument as before shows that $X(G, W)$ is equidimensional.
\begin{proof}[proof of theorem A] 
	By definition, for every $w \in U_{1}(G,W,>)^{c}$ there exists $v, v_1, ...,v_k \in V$, where $k \leq d - 1$, s.t. $w(v)$ is a linear combination of $w(v_1), \dots, w(v_k)$.
	
	 It's enough to prove the theorem for the locally closed set where $w(v)$ is a linear combination on $w(v_1), \dots, w(v_k)$. Denote this set by $Y$.  
	 
	 Consider the projection $\pi:Y\rightarrow X(G\setminus v,W)$ defined by forgetting $w(v)$. By induction $X(G \setminus v,W)$ has dimension $d(G\setminus v,W)$, and since $w(v)$ is linearly dependent on some $w_{i_{1}},...,w_{i_{k}}$, all fibers have dimension at most $k\leq d-1$.
	 
	 This gives $\dim(Y)\leq d-1 +d(G\setminus v,W)$. Now our assumption that $\dim(W)\geq d + D - 1$ implies that: $$d(G,W) = \dim(W)-\left| N(v)\right| + d(G\setminus v,W) \geq $$ $$\geq \dim(W)-D+d(G\setminus v,W)\geq $$ $$ \geq d-1+d(G\setminus v,W)\geq \dim(Y)$$
	In addition if $\dim(W) > d + D - 1$, then $\dim(Y)<d(G,W)$ and we are done.
\end{proof}
The following two examples show that the bounds on $\dim(W)$ in theorem A are sharp.
\begin{example}
	Let $G$ be a tree with maximal degree $D$, and let $W$ be a vector space of dimension $n$.
	
	 Let $Z$ be the subvariety of $X(G,W)$ defined by $w(v)=0$ for a vertex $v$ with maximal degree. Notice that this is isomorphic to $X(G \setminus v,W)$, and hence $\dim(Z)$ is at least $d(G\setminus v, W)$.
	 
	 Since $$d(G\setminus v,W)-d(G,W)=D-n$$ If $n \leq D$ the dimension of this part is at least $d(G,W)$. However, the dimension of the $>$-regular part is $d(G, W)$, and it can be easily deduced that $X(G, W)$ is reducible.
	 
	If $n<D$ we also get that $X(G,W)$ has dimension bigger then $d(G,W)$.
	\end{example}  
\begin{example}
	Let $G=K_{d,D}$ be the complete bipartite graph on $d,D$ vertices with $D\geq d$, its a $d$-degenerate graph with maximal degree $D$.
	
	Let $W$ be a symplectic space with dimension $n$.
	The expected dimension is $d(G,W)=nd+nD-dD$.
	
	 We look at the part of $X(G,W)$ where out of the $d$ vertices on the smaller side, $d-1$ of them have linearly independent vectors, and the last one is spanned by the others. It has dimension: $$n(d-1)+d-1+(n-(d-1))D = nd+nD-dD+d+D-1-n$$ so if $n\leq d+D-1$ that part has dimension at least $d(G,W)$, and so if $n\geq 2d$ using the $>$-regular part we get that $X(G,W)$ is reducible. 
	 
	If $n<d+D-1$ we get that the dimension of $X(G,W)$ is not $d(G,W)$.   
\end{example}
In fact, looking over the proof of dimension and irreducibility for the $>$-regular part, we see that the last example is reducible for $n\geq d$ (since the result of that part still holds), for smaller $n$ the $>$-regular part is obviously empty, and what we did gives no information on the variety. 
\subsection{Smoothness}
\label{subsec:3.2}
Now that we know the dimension of the variety, we can try and find a resolution of singularities (the variety itself is of course singular). Our best bet is the projective version of it:
\begin{definition}
		For a graph $G=(V,E)$ and a vector space  $(W,\left<\cdot,\cdot\right>)$, $\widetilde{X}(G,W)$ is the sub-scheme of $\mathbb{P}(W)^{V}$ defined by the equations $\left< w(v),w(u)\right>=0$ for $\{v,u\}\in E$.
\end{definition}
The main difference between our original graph variety and the projective version is points for which $w(v)=0$ for some $v$. If $O(G,W)$ is the open set of $X(G,W)$ where $w(v)\neq 0$ for all $v$, then there is a natural projection $f:O(G,W)\rightarrow \widetilde{X}(G,W)$. All of its fibers are smooth and irreducible with the same dimension, so those two schemes have similar properties. In particular one is smooth if and only if the other is.

To understand when $\PX{G}{W}$ is smooth, we will want to derive our equations. If $e:V\rightarrow W$ is a direction vector, and $w:V\rightarrow W$ a point of $O(G,W)$ then plugging $w+e$ in the equations we get $\left<w(v)+e(v),w(u)+e(u)\right>=\left<w(v),w(u)\right>+\left<e(v),w(u)\right>+\left<w(v),e(u)\right>+\left<e(v),e(u)\right>$.
Eliminating the second degree term we get that the derivative is $\left<e(v),w(u)\right>-\left<e(u),w(v)\right>$ if the form is symplectic, and $\left<e(v),w(u)\right>+\left<e(u),w(v)\right>$ if it is symmetric.

If $\dim(W)\geq d+D-1$ we know that $X(G,W)$ has the expected dimension, so checking if it is smooth at a point is the same as saying the linear conditions obtained by deriving the equations are linearly independent, so we arrive at the following claim:
\begin{claim}
	For a graph $G=(V,E)$ and a symplectic space $W$, if $\dim(W)$ is big enough, then a point $w:V\rightarrow W$ is singular if and only if there is some non-zero weighting of the edges $\lambda:E\rightarrow \mathbb{C}$ and a directing of the edges such that for each vertex $v$:$$\sum_{\{v,u\}\in E} \epsilon_{\{v,u\}}\cdot \lambda(\{v,u\}) \cdot w(u)=0$$ where $\epsilon_{u}$ is $1$ if the edge is directed toward $u$ and $-1$ otherwise. If the form is symmetric then it is the same but without the $\epsilon$.     
\end{claim}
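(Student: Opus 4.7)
The plan is to leverage Theorem A: the hypothesis on $\dim(W)$ forces $X(G,W)$ to have the expected dimension $|V|\dim W-|E|$, so it is a complete intersection in $W^V$ cut out by the $|E|$ equations $f_{\{v,u\}}(w)=\langle w(v),w(u)\rangle$. By the Jacobian criterion for complete intersections, $w\in X(G,W)$ is smooth iff the differentials $\{df_{\{v,u\}}|_w\}_{\{v,u\}\in E}$ are linearly independent in $T_w^*(W^V)$; equivalently, $w$ is singular iff there is a non-zero $\lambda:E\to\mathbb{C}$ with $\sum_{\{v,u\}}\lambda(\{v,u\})\,df_{\{v,u\}}|_w=0$. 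This reduces the claim to rewriting such a linear dependence in the stated combinatorial form.

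Next I would compute $df_{\{v,u\}}|_w$ directly. Expanding $\langle w(v)+e(v),w(u)+e(u)\rangle$ and collecting the linear-in-$e$ term yields $df_{\{v,u\}}|_w(e)=\langle e(v),w(u)\rangle+\langle w(v),e(u)\rangle$, which in the symplectic case becomes $\langle e(v),w(u)\rangle-\langle e(u),w(v)\rangle$. Because the symplectic form is anti-symmetric, $f_{\{v,u\}}$ is defined only up to sign; I would fix an arbitrary orientation of every edge so that the sum $\sum \lambda\,df_{\{v,u\}}$ makes literal sense, noting that any change of orientation is absorbed by $\lambda\mapsto-\lambda$ on that edge and hence does not affect whether a non-trivial relation exists.

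With the orientation fixed, I would regroup the relation $\sum_{\{v,u\}}\lambda(\{v,u\})\,df_{\{v,u\}}|_w(e)=0$ by vertex, collecting the slot $e(v)$ for each $v\in V$:
$$\sum_{v\in V}\Bigl\langle e(v),\ \sum_{\{v,u\}\in E}\epsilon_{\{v,u\}}\lambda(\{v,u\})\,w(u)\Bigr\rangle=0,$$
where $\epsilon_{\{v,u\}}=+1$ if the chosen orientation points from $v$ to $u$ and $-1$ otherwise. Since the $e(v)$ may be chosen independently in $W$, and since $\langle\cdot,\cdot\rangle$ is non-degenerate, the vanishing of this linear functional on $W^V$ is equivalent to the vector identity $\sum_{\{v,u\}\in E}\epsilon_{\{v,u\}}\lambda(\{v,u\})w(u)=0$ for every $v$. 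The symmetric case is identical with $+$ in place of $-$ in the derivative, which removes all the $\epsilon$ factors. Both directions of the equivalence are then immediate from this bookkeeping.

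The only delicate point, rather than a real obstacle, is getting the sign conventions right in the symplectic case: $df_{\{v,u\}}$ is intrinsically anti-symmetric in $(v,u)$, so the orientation choice is needed to make the linear combination well-defined, and one must check that collecting coefficients of $e(v)$ produces exactly the factor $\epsilon_{\{v,u\}}$. Everything else is a routine application of the Jacobian criterion to the complete intersection granted by Theorem A.
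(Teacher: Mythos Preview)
Your proposal is correct and follows exactly the approach the paper takes: the paper's argument for this claim is the short discussion immediately preceding it, which says that by Theorem A the variety has the expected dimension, so smoothness amounts to linear independence of the differentials $\langle e(v),w(u)\rangle \pm \langle e(u),w(v)\rangle$, and a linear dependence among these is precisely the weighting condition. Your write-up simply supplies the details (the orientation bookkeeping, the regrouping by vertex, and the appeal to non-degeneracy) that the paper leaves implicit.
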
    
This observation shows that the singularities of $X(G,W)$ are in some sense monotone - more explicitly, let $H$ is a subgraph of $G$, and $w:V(H)\rightarrow W$ is a singular point of $X(H,W)$ with a corresponding function $\lambda:E(H)\rightarrow \mathbb{C}$. Consider some point of $X(G,W)$ such that on the vertices of $H$ it is the same as $w$. Weight the edges in $H$ using $\lambda$, and give all the other edges weight $0$. This can easily be seem to satisfy the conditions of the claim, and so this is a singular point. 
With this in mind, we give the next claim:
\begin{claim}
\label{clm2}
	If $G$ is a circle , $W$ a symplectic space with $\dim(W)\geq 4$, then $O(G,W)$ is singular. If the circle has even length, and the form is symmetric then $O(G,W)$ is singular as well. 
\end{claim}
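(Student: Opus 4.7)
The plan is to apply the characterization of singular points from the preceding claim by exhibiting, in each case, an explicit point $w\in O(G,W)$ together with a non-zero edge-weighting $\lambda:E\to\mathbb{C}$ (and, in the symplectic case, an edge-orientation) that witnesses singularity. Write the cycle as $v_1,\ldots,v_n$ with edges $\{v_i,v_{i+1}\}$ taken modulo $n$.

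In the symplectic case with $n$ even, I pick nonzero linearly independent vectors $a,b\in W$ satisfying $\left<a,b\right>=0$ (possible as soon as $\dim W\geq 2$) and set $w(v_i)=a$ for $i$ odd and $w(v_i)=b$ for $i$ even. The edge conditions hold and $w\in O(G,W)$. Orient every edge cyclically as $v_i\to v_{i+1}$ and take $\lambda\equiv 1$. At vertex $v_i$ the characterization becomes $-w(v_{i-1})+w(v_{i+1})=0$, which holds because $v_{i-1}$ and $v_{i+1}$ share parity and hence are assigned the same vector. For $n$ odd, the alternating assignment is inconsistent across the closing edge $\{v_n,v_1\}$, so I use instead the constant assignment $w(v_i)=a$ for any $0\neq a\in W$; here the edge conditions $\left<a,a\right>=0$ are automatic for a symplectic form, and the same cyclic orientation with $\lambda\equiv 1$ again gives $-a+a=0$ at every vertex.

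In the symmetric case with $n$ even, I reuse the alternating assignment $w(v_i)=a$ or $b$ with $a,b$ linearly independent, nonzero, and $\left<a,b\right>=0$. Since the symmetric version of the characterization carries no signs $\epsilon$, the condition at $v_i$ reduces to $\lambda(\{v_{i-1},v_i\})w(v_{i-1})+\lambda(\{v_i,v_{i+1}\})w(v_{i+1})=0$, and using $w(v_{i-1})=w(v_{i+1})$ it further simplifies to $\lambda(\{v_{i-1},v_i\})+\lambda(\{v_i,v_{i+1}\})=0$. Hence I take $\lambda(\{v_i,v_{i+1}\})=(-1)^i$; the alternation is globally consistent around the cycle precisely because $n$ is even, producing a non-zero $\lambda$ that fulfills the condition. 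The same construction fails for odd $n$ because the sign pattern cannot close up, which is consistent with the claim making no assertion in that case.

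No deep obstacle arises: the constructions are entirely explicit and the verifications are one-line linear computations. The main bookkeeping points are that $w\in O(G,W)$ (guaranteed by $a,b\neq 0$), that the orthogonality equations along edges hold (immediate from $\left<a,b\right>=0$ or $\left<a,a\right>=0$), and that the hypothesis $\dim W\geq 4$ is strong enough to invoke the characterization of singular points; since the cycle has $d=D=2$, this is exactly the threshold from Theorem A that makes the preceding claim applicable.
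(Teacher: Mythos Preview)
Your proof is correct and follows the same overall strategy as the paper---exhibit an explicit point of $O(G,W)$ together with a nonzero edge-weighting satisfying the singularity criterion from the preceding claim---but the specific witnesses differ. The paper treats the symplectic case uniformly for every $n$: it assigns a \emph{single} nonzero vector $w$ to every vertex (legal since $\langle w,w\rangle=0$ automatically in a symplectic space), orients the edges cyclically, and weights them all by $1$. Your split into even and odd $n$, with a two-vector alternating assignment in the even case, is an unnecessary detour; the constant assignment you use for odd $n$ already works for all $n$. In the symmetric even case the paper again uses a constant assignment, now with a single isotropic vector $v$ (one with $\langle v,v\rangle=0$), and alternates $\lambda=\pm 1$; your two-vector version is a harmless variant with the mild advantage that it does not require an isotropic vector to exist.

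One small slip: your parenthetical ``possible as soon as $\dim W\geq 2$'' is false for a symplectic form. In a $2$-dimensional symplectic space the orthogonal complement of any nonzero vector $a$ is the line through $a$ itself, so no linearly independent $b\perp a$ exists. This does not damage the argument, since the hypothesis $\dim W\geq 4$ easily supplies such a pair, but the remark should be corrected.
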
    
\begin{proof}
	For the symplectic case, let $w\in W$ be a non-zero vector in $W$, we give all the vertices of $G$ the vector $w$, this is a point of $O(G,W)$ because $W$ is symplectic. 
	We direct the edges in a circle, and weight them all the same, This is easily seen to work.
	
	In the other case, pick some vector with $\left\langle v,v\right\rangle =0$, assign to all vertices this vector and weight the edges $\pm 1$ alternatively.
\end{proof}
Using this claim and the discussion above we get (only for symplectic spaces):
\begin{theoremB}
\label{thmB}
	If $\PX{G}{W}$ is smooth for $\dim(W)\geq d+D-1$, then $G$ is a forest.
\end{theoremB}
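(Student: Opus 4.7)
The plan is to prove the contrapositive: assuming $G$ is not a forest, produce a singular point of $\PX{G}{W}$. As remarked at the start of \ref{subsec:3.2}, the projection $O(G,W)\rightarrow \PX{G}{W}$ is a smooth surjection, so it is enough to exhibit a singular point $w$ of $X(G,W)$ lying in $O(G,W)$, i.e., with every $w(v)\neq 0$. Since $G$ is not a forest, it contains a cycle $C=v_{1}v_{2}\cdots v_{n}v_{1}$, and its degeneracy satisfies $d\geq 2$ because any $1$-degenerate graph is a forest. A cycle vertex has degree at least two, so $D\geq 2$, and the hypothesis $\dim(W)\geq d+D-1$ therefore yields $\dim(W)\geq D+1\geq 3$.

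I build $w$ in the spirit of Claim \ref{clm2}. Fix a nonzero $w_{0}\in W$ and set $w(v_{i})=w_{0}$ for every $v_{i}\in V(C)$; this is consistent with the defining equations on $C$ because $\langle w_{0},w_{0}\rangle =0$ in a symplectic space. I then extend $w$ to the remaining vertices of $G$, processed in any order: at each new vertex $v$, I select a nonzero vector in the orthogonal complement of the already-assigned neighbors of $v$. These neighbors impose at most $|N(v)|\leq D$ linear conditions on $W$, so the orthogonal complement has dimension at least $\dim(W)-D\geq 1$ and a nonzero choice exists. The resulting $w$ therefore lies in $X(G,W)\cap O(G,W)$.

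It remains to verify that $w$ is singular in $X(G,W)$, which I do via the singularity criterion (the claim preceding Claim \ref{clm2}) together with the monotonicity observation stated right after it. Take the weighting $\lambda$ on $E(G)$ assigning $1$ to every cycle edge, with the edges of $C$ directed consistently around the cycle, and $0$ to every other edge. At a cycle vertex $v_{i}$ the criterion sum $\sum_{\{v_{i},u\}}\epsilon_{\{v_{i},u\}}\lambda(\{v_{i},u\})w(u)$ collapses to the two cycle-edge contributions $-w_{0}+w_{0}=0$; at a non-cycle vertex every incident edge has weight zero and the sum vanishes trivially. Hence $w$ is a singular point of $X(G,W)$, and its image in $\PX{G}{W}$ is singular as required. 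The one delicate ingredient in this plan is the extension of $w$ to the non-cycle vertices with nonzero values, which is precisely where the bound $\dim(W)\geq D+1$ (equivalently $d\geq 2$) is essential; everything else is an immediate application of facts already in the section.
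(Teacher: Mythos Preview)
Your argument is correct and follows the same route as the paper: exhibit a cycle, put the same nonzero symplectic vector on all cycle vertices, weight the cycle edges with a consistent orientation, and invoke the Jacobian criterion together with the monotonicity remark to conclude that the resulting point of $X(G,W)$ is singular. The only place where you go beyond the paper's write-up is in making the extension to the non-cycle vertices explicit, so that the singular point actually lands in $O(G,W)$; the paper leaves this step implicit, and your use of $d\geq 2$ to deduce $\dim(W)\geq D+1$ and hence the existence of a nonzero orthogonal choice at each step is exactly what is needed to justify it.
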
  
For trees however it is easily seen to be true, In fact we prove a stronger claim.
\begin{claim}
	Let $G=(V,E,>)$ be an ordered graph, then all the $U_{i}(G,W)$ are smooth.
\end{claim}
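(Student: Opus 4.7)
The plan is to show directly that the Jacobian of the defining equations has full rank at every point of $U_i(G,W,>)$. Since by Claim \ref{clm1} the variety $U_i(G,W,>)$ is irreducible of dimension $d(G|_{A_i},W) = |A_i|\dim(W) - |E(G|_{A_i})|$, it is cut out as a complete intersection from the open subset of $W^{A_i}$ where the defining open conditions hold. Hence checking smoothness at a point amounts to verifying that the $|E(G|_{A_i})|$ rows of the Jacobian are linearly independent at that point, since this forces the tangent space to have the correct dimension.

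From the formula for the derivatives of the equations derived earlier in the section, a linear dependence among these Jacobian rows at a point $w \in U_i(G,W,>)$ corresponds to a nonzero weighting $\lambda : E(G|_{A_i}) \to k$ (together with a direction of each edge in the symplectic case) such that for every vertex $v \in A_i$,
$$\sum_{\{v,u\} \in E(G|_{A_i})} \epsilon_{\{v,u\}}\, \lambda(\{v,u\})\, w(u) = 0.$$
I would then argue that no such nonzero $\lambda$ exists, by induction on the size $r - i + 1$ of $A_i$. The base case $i = r$ is immediate: $U_r(G,W,>) = W \setminus \{0\}$, which is smooth.

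For the inductive step, let $v_i$ be the smallest element of $A_i$. Every edge of $G|_{A_i}$ incident to $v_i$ has its other endpoint in $N_>(v_i) \cap A_i$, because $v_i$ is the minimum. The equation above at $v = v_i$ reads
$$\sum_{u \in N_>(v_i) \cap A_i} \epsilon_{\{v_i,u\}}\, \lambda(\{v_i,u\})\, w(u) = 0,$$
and by the defining open condition of $U_i(G,W,>)$, the family $w(N_>(v_i) \cap A_i)$ is linearly independent. Hence $\lambda$ vanishes on every edge incident to $v_i$. What remains of $\lambda$ is supported on $E(G|_{A_{i+1}})$, and for each $v \in A_{i+1}$ the equations above reduce to the analogous condition for $G|_{A_{i+1}}$ at the point $w|_{A_{i+1}} \in U_{i+1}(G,W,>)$. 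By the inductive hypothesis, $U_{i+1}(G,W,>)$ is smooth, so the restricted $\lambda$ must be zero, hence $\lambda$ is identically zero.

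No step looks like a genuine obstacle here: the inductive structure mirrors the projection $f_i : U_i(G,W,>) \to U_{i+1}(G,W,>)$ used to compute the dimension, and the core observation is simply that at the minimal vertex the relevant neighbors lie in $N_>(v_i) \cap A_i$ so the open condition can be invoked verbatim. The only mild subtlety is making sure that full rank of the Jacobian really implies smoothness, which is legitimate because we already know $U_i(G,W,>)$ is a complete intersection of the expected dimension from Claim \ref{clm1}.
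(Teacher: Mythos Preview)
Your proof is correct and follows essentially the same strategy as the paper: reduce smoothness to the nonexistence of a nonzero edge-weighting $\lambda$ satisfying the linear relations at each vertex, then peel off the minimal vertex $v_i$ using the linear independence of $w(N_{>}(v_i)\cap A_i)$ to kill all weights on edges at $v_i$, and finish by induction. The only cosmetic remark is that your appeal to Claim~\ref{clm1} is unnecessary (and carries the extra hypothesis $\dim(W)\ge 2d$): the Jacobian criterion already gives smoothness directly once the differentials of the defining equations are shown to be linearly independent, without needing to know the dimension in advance.
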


Since for the case of a tree $O(G,W)$ is an open subset of $U_{1}(G,W)$ this implies the result.

\begin{proof}
	We prove that there is no weighting of the graph satisfying the condition. 
	For the first vertex $v$, we get that the sum of its neighbors' vectors with some weights must be zero, however his neighbors are linearly independent so all of his weights must be zero. Thus $v$ does not effect the existence of a weighting, and so we can look at $G\setminus v$ instead. By induction we are done. 
\end{proof}

For the symmetric case a few more graphs have smooth projective varieties. In the same way as claim \ref{clm2} one can show that if $G$ contains two connected circles then $\PX{G}{W}$ is singular. The remaining graphs are trees with one of their vertices enlarged to an odd circle, and for them it's easy to show that $\PX{G}{W}$ is indeed smooth. We omit the proof as it is very similar to the symplectic case, and we will not use it.

\subsection{The Canonical Bundle}
\label{subsec:3.3}
Using the previous subsection, we calculate the canonical bundle of $X(G,W)$ for the case where $G$ is a forest.
First, a notion:
\begin{notion}
	We denote $\mathcal{O}_{X,v}(n)$ the bundle on $X\subseteq \mathbb{P}(W)^{V}$ of degree $n$ homogeneous polynomial in the coordinates of $w(v)$, i.e if $f:X\rightarrow \mathbb{P}(W)$ is the projection on $w(v)$ then this is $f^{*}\mathcal{O}_{\mathbb{P}(W)}(n)$.
\end{notion}
\begin{theorem}
	If $G$ is a forest, and $\dim(W)=n$, then the canonical bundle of $\PX{G}{W}$ is: $$\bigotimes_{v\in V} \mathcal{O}_{X(G,W),v}(-n+\left| N(v)\right|  )$$
\end{theorem}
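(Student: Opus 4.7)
The plan is to realize $\PX{G}{W}$ as a smooth complete intersection inside the smooth ambient variety $Y:=\mathbb{P}(W)^{V}$, and then to apply the adjunction formula. For the length of this sketch, write $\mathcal{O}_{v}(k)$ for the pullback of $\mathcal{O}_{\mathbb{P}(W)}(k)$ under the projection $Y\to\mathbb{P}(W)$ onto the $v$-th factor; its restriction to $\PX{G}{W}$ is the bundle denoted $\mathcal{O}_{\PX{G}{W},v}(k)$ in the statement.

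First, I would compute $\omega_{Y}$. Using the canonical bundle of $\mathbb{P}^{n-1}$ together with the product formula recalled in Section \ref{subsec:2.2}, one immediately obtains $\omega_{Y}=\bigotimes_{v\in V}\mathcal{O}_{v}(-n)$. Next, for each edge $\{v,u\}\in E$, the defining equation $\left\langle w(v),w(u)\right\rangle=0$ is bihomogeneous of bidegree $(1,1)$ in the coordinates of $w(v)$ and $w(u)$, so it is a section of the line bundle $\mathcal{O}_{v}(1)\otimes\mathcal{O}_{u}(1)$ on $Y$, and $\PX{G}{W}$ is the common zero locus of these $|E|$ sections.

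The central input is that $\PX{G}{W}$ is a smooth complete intersection in $Y$. Smoothness for forests is established in Section \ref{subsec:3.2}. For the complete intersection property, combine Theorem A with the natural smooth surjection $O(G,W)\to\PX{G}{W}$, whose fibers have dimension $|V|$: Theorem A gives $\dim X(G,W)=n|V|-|E|$, whence $\dim\PX{G}{W}=|V|(n-1)-|E|$, matching the codimension forced by the number of equations. Given this, the adjunction formula for smooth complete intersections yields
$$\omega_{\PX{G}{W}}=\omega_{Y}\bigl|_{\PX{G}{W}}\otimes\bigotimes_{\{v,u\}\in E}\bigl(\mathcal{O}_{v}(1)\otimes\mathcal{O}_{u}(1)\bigr)\bigl|_{\PX{G}{W}}.$$
Collecting factors vertex by vertex, the total $\mathcal{O}_{v}$-exponent on the right is $-n$ coming from $\omega_{Y}$ plus $+1$ for each edge incident to $v$, producing $-n+|N(v)|$, which is exactly the claimed formula.

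The main obstacle is that the version of adjunction recalled in Section \ref{subsec:2.2} is stated only for a single smooth divisor. I would handle this either by invoking the general conormal-bundle formulation, in which the conormal sheaf of a smooth complete intersection splits as the direct sum of the line bundles dual to its defining equations, so that its determinant contributes precisely the product in the display above; or by adding the equations one at a time in an order compatible with a $1$-degenerate ordering of the forest, verifying that each intermediate scheme is visibly smooth of the expected codimension (so that ordinary divisorial adjunction applies at each step) and then telescoping the resulting line bundle identities.
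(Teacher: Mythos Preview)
Your proposal is correct and matches the paper's proof: the paper likewise computes $\omega_{\mathbb{P}(W)^V}$, identifies each edge equation as a section of $\mathcal{O}_{u_1}(1)\otimes\mathcal{O}_{u_2}(1)$, and then adds the edges one at a time, applying divisorial adjunction at each step---precisely your second alternative. The only cosmetic difference is that the paper removes an arbitrary edge and inducts on $|E|$ (which works since every subgraph of a forest is a forest, so each intermediate $\PX{G\setminus e}{W}$ is smooth), rather than singling out a $1$-degenerate ordering.
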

This will follow easily from the adjunction formula and the fact the singularities of the variety are monotone - this means that if we add the equations of the edges one by one, we will get a smooth variety at every step, and so we can use the adjunction formula.
\begin{proof}
	We prove by induction on the number of edges, if the graph is empty, then $\PX{G}{W}=\mathbb{P}(W)^{V}$ and since the canonical of $P(W)$ is $\mathcal{O}_{\mathbb{P}(W)}(-n)$, we get the result for the empty graph.
	
	Let us pick any edge $e=\{u_{1},u_{2}\}$ of $G$, and look at $G\setminus e$, by induction the theorem is true for $G\setminus e$, notice that $X(G,W)$ is obtained from $X(G\setminus e,W)$ by adding the equation $\left<w(u_{1}),w(u_{2})\right>$ which is linear in $w(u_{1})$ and $w(u_{2})$, this means that $X(G,W)$ corresponds to the smooth divisor on $X(G\setminus e, W)$ with bundle $\mathcal{O}_{X(G\setminus e,W),u_{1}}(1)\otimes \mathcal{O}_{X(G\setminus e,W),u_{2}}(1)$. Now, using the adjunction formula, we have: $$K_{X(G,W)}=i^{*}(K_{X(G\setminus e,W)}\otimes \mathcal{O}_{X(G\setminus e,W),u_{1}}(1)\otimes \mathcal{O}_{X(G\setminus e,W),u_{2}}(1))$$ 
	By induction we know: $$K_{X(G\setminus e,W)}=\bigotimes_{v\in V} \mathcal{O}_{X(G\setminus e,W),v}(-n+\left| N(v)\right|)\otimes \mathcal{O}_{X(G\setminus e,W),u_{1}}(-1)\otimes \mathcal{O}_{X(G\setminus e,W),u_{2}}(-1)$$
	So we get $$K_{X(G,W)}=i^{*}(\bigotimes_{v\in V} \mathcal{O}_{X(G\setminus e,W),v}(-n+\left| N(v)\right|))=\bigotimes_{v\in V} \mathcal{O}_{X(G,W),v}(-n+\left| N(v)\right|)$$
\end{proof}

A particular interesting case is when $n>D$ where $D$ is the maximal degree of the tree, then we get that the anti-canonical bundle is ample, since all the twists appearing in the formula for the canonical bundle are negative.

\section{Singularities}
\label{sec:4}
\subsection{Rational Singularities of $X(G,W)$}
\label{subsec:4.1}
An application of the last part we prove:
\begin{theoremC}
\label{thmC}
    Let $G$ be a forest with maximal degree $D$ and $W$ a symplectic space with $\dim(W)\geq D+1$, then $X(G,W)$ has rational singularities.
\end{theoremC}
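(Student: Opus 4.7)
The plan is to build an explicit resolution $\pi\colon Y\to X(G,W)$ using the (already known smooth) projective graph variety, apply Kodaira vanishing via the canonical bundle formula from subsection \ref{subsec:3.3} to kill the higher direct images, and then check normality via Serre's criterion. Concretely, I would take $Y = \mathrm{Tot}\bigl(\bigoplus_{v\in V}\mathcal{L}_v\bigr)$ where $\mathcal{L}_v = \mathcal{O}_{\PX{G}{W},v}(-1)$, so a point of $Y$ is a pair $\bigl((\ell_v),(w_v)\bigr)$ with $(\ell_v)\in\PX{G}{W}$ and $w_v\in\ell_v$; the orthogonality of the lines along the edges forces $(w_v)\in X(G,W)$ and gives the natural map $\pi$. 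Since $G$ is a forest and $\dim W\geq D+1$, $\PX{G}{W}$ is smooth (by the claim in subsection \ref{subsec:3.2} on the smoothness of $U_1(G,W,>)$), so $Y$ is smooth as a vector bundle over it; the map $\pi$ is proper (the projective base composed with an affine bundle) and birational (an isomorphism over the dense open $O(G,W)$, using irreducibility from \hyperref[thmA]{Theorem A}).

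For the vanishing of $R^q\pi_*\mathcal{O}_Y$, since $X(G,W)$ is affine the higher direct images are determined by their global sections, so by Leray it suffices to show $H^q(Y,\mathcal{O}_Y)=0$ for $q>0$. The affine vector bundle projection $p\colon Y\to\PX{G}{W}$ satisfies $p_*\mathcal{O}_Y = \bigoplus_{(a_v)\in\mathbb{Z}_{\geq 0}^V}\bigotimes_v\mathcal{O}_{\PX{G}{W},v}(a_v)$, so a second Leray reduces the task to showing $H^q\bigl(\PX{G}{W},\bigotimes_v\mathcal{O}_{\PX{G}{W},v}(a_v)\bigr)=0$ for each $(a_v)\geq 0$. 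Using $\omega_{\PX{G}{W}} = \bigotimes_v\mathcal{O}_{\PX{G}{W},v}(|N(v)|-n)$ I would write $\bigotimes_v\mathcal{O}_{\PX{G}{W},v}(a_v) = \omega_{\PX{G}{W}}\otimes\bigotimes_v\mathcal{O}_{\PX{G}{W},v}(a_v+n-|N(v)|)$; each exponent $a_v+n-|N(v)|$ is at least $1$ because $n\geq D+1\geq |N(v)|+1$ and $a_v\geq 0$, so the twisting factor is the restriction of an ample line bundle from $\mathbb{P}(W)^V$ and Kodaira vanishing applies.

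Normality proceeds via Serre's criterion: by \hyperref[thmA]{Theorem A} the scheme $X(G,W)$ is a complete intersection of the expected dimension, hence Cohen-Macaulay and $S_2$, so I need only show the singular locus has codimension at least two. The weighting criterion from subsection \ref{subsec:3.2} forces any singular $w$ to satisfy $w(p)=0$ for some $p$ (choose $p$ as the unique neighbor of a leaf of the support subgraph of the weighting, and apply the equation at that leaf). An induction on $|V|$ using the identification $\{w(p)=0\}\cap X(G,W)\cong X(G\setminus p,W)$, and separating into the case where $\{w(u)\}_{u\in N(p)}$ is linearly dependent (additional codimension $n-|N(p)|+1\geq 2$ in $X(G\setminus p,W)$) or the case where $w|_{V\setminus p}$ is itself a singular point of $X(G\setminus p,W)$ (codimension at least $2$ by induction), yields total codimension at least $3$ in $X(G,W)$.

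The main obstacle is the normality step rather than the Kodaira step: the bound $\dim W\geq D+1$ is sharp enough that the naive codimension $n-|N(p)|$ of $\{w(p)=0\}$ alone degenerates to $1$ when $p$ has maximal degree, so the combinatorial refinement above is essential to keep the codimension estimate above two. The vanishing step, by contrast, matches almost tautologically against the canonical bundle formula and the ampleness threshold once they are in hand.
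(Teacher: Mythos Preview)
Your approach is essentially the same as the paper's: both build the resolution $Y$ as the total space of the direct sum of tautological line bundles over $\PX{G}{W}$, reduce $R^q\pi_*\mathcal{O}_Y=0$ (via affineness of $X(G,W)$ and of the bundle projection $p$) to the vanishing of $H^q\bigl(\PX{G}{W},\bigotimes_v\mathcal{O}_v(a_v)\bigr)$ for $a_v\geq 0$, and conclude by Kodaira using the canonical bundle computation from \S\ref{subsec:3.3}. The only cosmetic difference is that the paper packages the last step as ``basepoint-free line bundle tensor anti-ample $\omega^{-1}$'' rather than directly checking each twist is $\geq 1$.

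You go beyond the paper in one respect: you explicitly verify normality via Serre's criterion, which the paper's own definition of rational singularities demands but the paper never addresses. Your argument here is sound in outline---the leaf-of-the-support trick correctly shows $\mathrm{Sing}\subseteq\bigcup_p\{w(p)=0\}$, and the dichotomy (dependent neighbours vs.\ singular restriction) is valid. The precise codimension $n-|N(p)|+1$ you quote for the dependent-neighbours locus inside $X(G\setminus p,W)$ is not literally justified (you are not in $W^{N(p)}$ but in a product of tree varieties), yet all you actually need is that this locus is proper and closed in the irreducible variety $X(G\setminus p,W)$, hence of codimension $\geq 1$; combined with the codimension $\geq 1$ of $\{w(p)=0\}$ this already gives the required $R_1$.
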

Since it is obviously sufficient to prove this for trees, assume for the rest of this section that $G$ is a tree with maximal degree $D$.

\begin{definition}
	$Y(G,W)$ is the subspace of $\PX{G}{W}\times W$ s.t. for all points $(l,w)\in Y$ we have $w \in l $.
\end{definition}

There are natural morphisms from $Y(G,W)$ to $X(G,W)$ and $\PX{G}{W}$ given by forgetting the lines or the vectors respectively, and denoted by $f$ and $p$ respectively. 

Notice that on $f^{-1}(O(G, W))$, $f$ is an isomorphism. Thus $f$ is a birational equivalence between $Y(G, W)$ and $X(G, W)$. In fact, we claim that $f$ is a resolution of singularities. We have to verify that $Y(G, W)$ is smooth and that $f$ is proper. 

The latter is clear since $f$ can be decomposed as a closed embedding to a product of a variety and a projective space, followed by a projection to the variety.

As for the former, one can easily describe $Y(G,W)$ as the total space of a bundle that is essentially the tautological bundle of $\PX{G}{W}$, more formally:
\begin{claim}
	If $p_{v}:\PX{G}{W}\rightarrow \mathbb{P}(W)$ is the projection on $w(v)$ and $T$ is the tautological bundle on $\mathbb{P}(W)$ then:
	$$Y(G,W)=\bigoplus_{v\in V} p^{*}_{v}(T)$$
\end{claim}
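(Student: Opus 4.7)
The plan is to verify the identification by unwinding the definitions of all objects involved. Recall that the total space of the tautological line bundle $T$ on $\mathbb{P}(W)$ is, by definition, the incidence variety $\{(\ell, w) \in \mathbb{P}(W) \times W : w \in \ell\}$, with bundle projection given by forgetting $w$. First I would compute each $p_v^*(T)$ explicitly: since $p_v: \PX{G}{W} \to \mathbb{P}(W)$ sends a point $l$ to the single line $l(v)$, the pullback is the fiber product $\PX{G}{W} \times_{\mathbb{P}(W)} T$, which as a subscheme of $\PX{G}{W} \times W$ is precisely $\{(l, w) : w \in l(v)\}$, a line bundle on $\PX{G}{W}$ with fiber $l(v)$ over the point $l$.

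Next, since the total space of a direct sum of vector bundles over a common base agrees with the fiber product of their total spaces over that base, the total space of $\bigoplus_{v \in V} p_v^*(T)$ is the subscheme of $\PX{G}{W} \times W^V$ consisting of tuples $(l, (w(v))_{v \in V})$ with $w(v) \in l(v)$ for every $v \in V$. Reading $w$ as a function $V \to W$, this is exactly the defining condition of $Y(G,W)$, so the set-theoretic match is immediate.

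The only point I would address carefully is that the identification is scheme-theoretic rather than merely set-theoretic. This follows because both sides are cut out of $\PX{G}{W} \times W^V$ by the same incidence equations: locally, after trivializing each $p_v^*(T)$ over a chart of $\PX{G}{W}$, the condition $w(v) \in l(v)$ becomes the vanishing of the image of $w(v)$ in the rank-$(\dim W - 1)$ quotient $W / l(v)$, and this is the same system of equations used to define $Y(G,W)$. Equivalently, both objects represent the same functor on $\PX{G}{W}$-schemes. I do not anticipate any real obstacle here; the claim is essentially a bookkeeping exercise. The payoff is that $Y(G,W)$ is automatically smooth, as the total space of a vector bundle over the smooth base $\PX{G}{W}$ (which is smooth by \hyperref[thmB]{Theorem B} since $G$ is a tree), completing the verification that $f$ is a resolution of singularities.
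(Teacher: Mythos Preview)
Your argument is correct. The paper does not actually prove this claim: it is stated without proof, prefaced only by the remark that one can ``easily describe $Y(G,W)$ as the total space of a bundle that is essentially the tautological bundle of $\PX{G}{W}$,'' and the paper then immediately uses the consequence that the fibers of $p$ are $\mathbb{A}^{V}$. Your unwinding of the definitions---identifying each $p_v^*(T)$ with the incidence scheme $\{(l,w)\in\PX{G}{W}\times W: w\in l(v)\}$ and then the direct sum with the simultaneous incidence condition over all $v$---is exactly the verification the paper omits, and your remark on the scheme-theoretic (not merely set-theoretic) agreement is the only point where any care is needed. Note incidentally that the paper's definition of $Y(G,W)$ as a subspace of $\PX{G}{W}\times W$ is a notational slip; as you correctly read it (and as the rank-$|V|$ claim and the map $f$ to $X(G,W)\subseteq W^V$ confirm), it should be $\PX{G}{W}\times W^V$ with $w(v)\in l(v)$ for every $v$.
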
  

It follows that all the fibers of $p$ are $\mathbb{A}^{V}$, so in particular $Y(G,W)$ is smooth if $\PX{G}{W}$ is (i.e. when $\dim(W) > D$).

Thus we see that indeed $f$ is a resolution of singularities for $X(G, W)$. To check if $X(G, W)$ has rational singularities, we need to see if $R^q f_* \mathcal{O}_{Y(G, W)} = 0$ for $q > 0$.

Now we look at the diagram:

$$\begin{matrix}
	Y(G,W) & \arrow{r}{f} & X(G,W) \cr
	\arrow{d}{p} & & 	\arrow{d}{C} \cr
	\PX{G}{W} & \arrow{r}{C} & pt \cr
	
\end{matrix}$$ 

Where the arrows to $pt$ are the obvious maps.

Now notice that the push-forward from $X(G,W)$ to $pt$ is the  global sections functor, and since $X(G,W)$ is affine, on quasi-coherent sheaves it is faithfully exact, so it's enough to show that $R^{q}(Cf)_{*}\mathcal{O}_{Y(G,W)}=0$ for $q>0$.

Further more since $p$ is a vector bundle projection and therefore affine, $p_{*}$ is also exact, together with the fact that $Cf=Cp$ its enough to show that $R^{q}C_{*}(p_{*}\mathcal{O}_{Y(G,W)})=H^{q}(p_{*}\mathcal{O}_{Y(G,W)})=0$ for $q>0$.

Now we use the fact that if $(E,p)$ is a vector bundle, then $p_{*}\mathcal{O}_{E}=\Sym(E^{*})$ where $\Sym(E^{*})$ is the sum of all symmetric products of $E^{*}$.

In particular in our case:
$$p_{*}\mathcal{O}_{Y(G,W)}=\Sym(\bigoplus_{v\in V} p^{*}_{v}(T)^{*})=\bigoplus_{i:V\rightarrow \mathbb{N}} \bigotimes_{v\in V} p_{v}^{*}(T)^{-i(v)} $$
So we only have to show that $H^{q}(\bigotimes_{v\in V} p_{v}^{*}(T)^{-i(v)} )=0$ for all $i:V\rightarrow \mathbb{N}$ and $q>0$. For this we only need the fact that this line bundle has no base points.
Indeed, we have the general claim:
\begin{claim}
	If $\mathcal{L}$ is a line bundle with no base points on a projective variety $X$, and $\omega_{X}$ is anti-ample, then $H^{q}(\mathcal{L})=0$ for $q>0$.
\end{claim}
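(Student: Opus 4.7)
The natural approach is to rewrite $\mathcal{L}$ in a form where Kodaira vanishing applies directly. Set $\mathcal{M} = \mathcal{L} \otimes \omega_X^{-1}$, so that $\mathcal{L} = \omega_X \otimes \mathcal{M}$. If we can show that $\mathcal{M}$ is ample, then the Kodaira vanishing theorem recalled in Section \ref{subsec:2.3} immediately gives $H^q(X,\mathcal{L}) = H^q(X,\omega_X \otimes \mathcal{M}) = 0$ for $q>0$, as desired. (We should note that in our intended application $X = \PX{G}{W}$ is smooth projective by \hyperref[thmB]{Theorem B} together with the canonical bundle computation in \ref{subsec:3.3}, so Kodaira applies.)

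So the whole content of the claim is the statement that $\mathcal{M} = \mathcal{L} \otimes \omega_X^{-1}$ is ample. The assumption that $\omega_X$ is anti-ample means precisely that $\omega_X^{-1}$ is ample. The assumption that $\mathcal{L}$ is base-point-free means that the evaluation map gives a morphism $\varphi:X \to \mathbb{P}^{N}$ with $\mathcal{L} = \varphi^{*}\mathcal{O}_{\mathbb{P}^{N}}(1)$; in particular $\mathcal{L}$ is nef (it has non-negative degree on every curve, since $\mathcal{O}_{\mathbb{P}^{N}}(1)$ does). Thus $\mathcal{M}$ is the tensor product of the ample line bundle $\omega_X^{-1}$ with the nef line bundle $\mathcal{L}$.

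The remaining step is the standard fact that an ample line bundle tensored with a nef line bundle is again ample. This is a direct consequence of the Nakai--Moishezon / Kleiman criterion: for every subvariety $Z \subseteq X$ of dimension $k$, we have $(\omega_X^{-1})^{k} \cdot Z > 0$ by ampleness of $\omega_X^{-1}$, and expanding $(\omega_X^{-1} \otimes \mathcal{L})^{k} \cdot Z$ using the binomial formula gives a sum whose leading term is this positive quantity and whose remaining terms are non-negative since $\mathcal{L}$ is nef; hence $\mathcal{M}$ intersects every subvariety positively, and so is ample.

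I do not expect a real obstacle here: the only subtle point is knowing that ample $+$ nef is ample, which is a well-known consequence of the openness of the ample cone (or Nakai--Moishezon), and the rest is a bookkeeping rewrite designed to feed Kodaira vanishing. The whole argument is therefore quite short once the right formulation $\mathcal{L} = \omega_X \otimes (\mathcal{L}\otimes \omega_X^{-1})$ is set up.
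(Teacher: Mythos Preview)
Your proposal is correct and follows essentially the same route as the paper: both rewrite $\mathcal{L} = \omega_X \otimes (\omega_X^{-1} \otimes \mathcal{L})$, argue that $\omega_X^{-1} \otimes \mathcal{L}$ is ample, and then invoke Kodaira vanishing. The only difference is cosmetic: the paper simply asserts that base-point-free tensored with ample is ample, whereas you spell this out via ``base-point-free $\Rightarrow$ nef'' together with ``ample $+$ nef $=$ ample''.
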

In our case, it is obvious that our line bundles have no base points since the anti-canonical bundle of $\mathbb{P}(W)$ and $\mathcal{O}_{\mathbb{P}(W)}$ have no base points, so each $p_{v}^{*}(T)^{*}$ has no base points.

\begin{proof}
	Notice that $\mathcal{L}=\omega_{X}\otimes \omega_{X}^{*}\otimes \mathcal{L}$. Since $\omega_{X}^{*}$ is ample, and $\mathcal{L}$ has no base points, then $\omega_{X}^{*}\otimes \mathcal{L}$ is ample as well. Now Kodaira's vanishing theorem says that $H^{q}(\omega_{X}\otimes (\omega_{X}^{*}\otimes \mathcal{L}))=0$ for $q>0$ and we are done.
\end{proof}

We saw that if $\dim(W)>D$ then $\omega_{X}$ is anti-ample, and so using the claim we are done.

\subsection{Degeneration}
\label{subsec:4.2}
In \cite[Section 2]{AA1}, in order to prove $\mathrm{FRS}$ (Flat Rational singularity) for the $Def_{G,n}$, a degeneration method was used, which essentially comes down to proving that our graph varieties have rational singularities. These methods also work for graphs varieties and give the following claim:
\begin{definition} 
	Let $G=(V,E)$ be a graph, $M$ a finite set, and $w:V\rightarrow \mathbb{Z}^{M}$ a weighting of the vertices. We define $w:E\rightarrow \mathbb{Z}^{M}$ by $w(\{v_{1},v_{2}\})=w(v_{1})+w(v_{2})$. Suppose that for each edge, the sequence $w(e)$ has a maximum. We then have a splitting of the graph $G_{i}=(V,E_{i})$ for $i\in M$ by $E_{i}=\{e\in E \mid \max(w(e))=w(e)_{i}\}$ 
\end{definition} 

\begin{claim}\cite[Corollary 2.4.9]{AA1}
	Let $G=(V,E)$ be a graph, $M$ a finite set, $w:V\rightarrow \mathbb{Z}^{M}$ a weighting of the vertices, and $W_{i}$ for $i\in M$ vector spaces. If $X(G_{i},W_{i})$ has rational singularities for any $i\in M$ then $X(G,\oplus W_{i})$ has rational singularities.
\end{claim}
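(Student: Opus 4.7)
The plan is to realize $X(G, \bigoplus_{i \in M} W_i)$ as the generic fiber of a flat one-parameter family whose special fiber is $\prod_{i \in M} X(G_i, W_i)$, and then invoke Elkik's theorem on the openness of rational singularities in flat families.

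Set $W = \bigoplus_{i \in M} W_i$, and for $t \in \mathbb{G}_m$ define an automorphism $\phi_t$ of $W^V$ acting on the $W_i$-component of each vertex $v$ by multiplication by $t^{w(v)_i}$. A direct computation gives, for an edge $e = \{u, v\}$,
\[
\langle \phi_t(f)(u), \phi_t(f)(v) \rangle = \sum_{i \in M} t^{w(e)_i}\, \langle f_i(u), f_i(v) \rangle_i.
\]
Setting $m(e) = \max_i w(e)_i$ and dividing by $t^{m(e)}$, one obtains polynomials $F_e(f, t)$ with only non-negative powers of $t$, cutting out a closed subscheme $\mathcal{X} \subseteq W^V \times \mathbb{A}^1$ equipped with projection $\pi \colon \mathcal{X} \to \mathbb{A}^1$.

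By construction $\pi^{-1}(t) \cong X(G, W)$ for every $t \neq 0$, via $\phi_t$. At $t = 0$ only the unique maximizing term survives in each $F_e$, yielding $\langle f_{i^*(e)}(u), f_{i^*(e)}(v) \rangle_{i^*(e)}$, where $i^*(e) \in M$ is the maximizer defining the membership of $e$ in $E_{i^*(e)}$. Since for distinct $i$ these equations involve disjoint coordinate blocks, the special fiber factors as $\pi^{-1}(0) = \prod_{i \in M} X(G_i, W_i)$ sitting inside $W^V = \bigoplus_i W_i^V$. As products of varieties with rational singularities have rational singularities, the hypothesis gives that $\pi^{-1}(0)$ does.

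Granted the flatness of $\pi$, Elkik's theorem implies that the locus in $\mathbb{A}^1$ where fibers have rational singularities is open, hence contains a neighborhood of $0$, and in particular some $t \neq 0$; any such fiber is $X(G, W)$, and we are done. The main obstacle is the flatness of $\pi$. I would establish it by showing that $\mathcal{X}$ is a complete intersection cut out by the $|E|$ equations $F_e$ in $W^V \times \mathbb{A}^1$, which by miracle flatness reduces to the dimension count on the special fiber, using $\sum_i |E_i| = |E|$ and $\sum_i \dim W_i = \dim W$ together with the Cohen--Macaulayness of each factor $X(G_i, W_i)$ that comes for free from rational singularities.
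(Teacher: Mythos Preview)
The paper does not prove this claim; it is quoted from \cite[Corollary~2.4.9]{AA1}. Your degeneration-plus-Elkik strategy is exactly the method behind that result, so in outline you have reconstructed the intended argument.

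Two steps need more care. First, flatness: you want the special fiber $\prod_i X(G_i,W_i)$ to have dimension exactly $|V|\dim W-|E|$, which forces each $X(G_i,W_i)$ to be a complete intersection in $W_i^{V}$. You derive this from Cohen--Macaulayness, but CM does not bound dimension from above. For a concrete failure, take $G_i$ a triangle and $W_i$ symplectic of dimension $2$: then $X(G_i,W_i)$ is the variety of $2\times 3$ matrices of rank $\le 1$, which has rational singularities yet dimension $4>3=6-3$. The result in \cite{AA1} carries (implicitly or explicitly) a complete-intersection/flatness hypothesis on the pieces; in this paper's applications the $G_i$ are disjoint unions of isolated edges and vertices, where the hypothesis is trivially met, but your deduction of the dimension bound from CM alone is not valid.

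Second, Elkik: the form you invoke---``the locus in $\mathbb{A}^1$ over which the fiber has rational singularities is open''---requires $\pi$ to be proper, which it is not. What Elkik actually gives is that the bad locus $B\subset\mathcal{X}$ (points where the fiber is not rationally singular) is closed. To conclude, use the $\mathbb{G}_m$-action $(f,t)\mapsto(s^{N}\phi_s^{-1}f,\,st)$ with $N>\max_{v,i}w(v)_i$: it preserves $\mathcal{X}$, permutes fibers by isomorphisms, and contracts everything to the origin in $\pi^{-1}(0)$ as $s\to 0$. Since $B$ is closed, invariant under this action, and disjoint from $\pi^{-1}(0)$, it must be empty, and hence every fiber---in particular $X(G,W)$---has rational singularities.
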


This allows us to split the graph at the price of lowering the dimension of the vector space, so we would like to split the graph using as little colors as possible. We will prove the following theorem:

\begin{theoremD}
\label{thmD}
	Let $G$ be a graph with maximal degree $D$. There is a splitting with $p_{4}(D)=\frac{D^{2}(D+1)^{2}}{2}-1$ colors, s.t. each $G_{i}$ is the disjoint union of edges and vertices.  
\end{theoremD}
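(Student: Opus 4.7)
The plan is to reduce the theorem to a distance-$2$ vertex coloring of $G$ and then realize the resulting edge partition by a very simple indicator weighting.

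The first step is to produce a distance-$2$ proper vertex coloring $\phi\colon V \to [R]$, meaning any two distinct vertices at graph distance at most $2$ in $G$ receive distinct colors. The square graph $G^{2}$ has maximum degree bounded by $D + D(D-1) = D^{2}$, so a standard greedy argument on $G^{2}$ produces such a coloring with $R \leq D^{2}+1$.

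The second step is to take $M$ to be the set of unordered pairs $\{a,b\}$ of distinct elements of $[R]$, and to set $w(v)_{\{a,b\}} = \mathbf{1}[\phi(v)\in\{a,b\}]$. For any edge $e = \{u,v\}$, since $\phi(u) \neq \phi(v)$, the sum $w(e)_{\{a,b\}} = \mathbf{1}[\phi(u)\in\{a,b\}] + \mathbf{1}[\phi(v)\in\{a,b\}]$ attains its maximum value $2$ precisely at the single coordinate $\{\phi(u),\phi(v)\}$ and is at most $1$ at every other coordinate. Hence $E_{\{a,b\}}$ is exactly the set of edges whose endpoint colors form the pair $\{a,b\}$.

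The third step is to verify that each such $E_{\{a,b\}}$ is a matching: if two edges $\{u,v_{1}\}$ and $\{u,v_{2}\}$ shared a vertex $u$ and both lay in $E_{\{a,b\}}$, then $v_{1}$ and $v_{2}$ would be distinct neighbors of $u$ sharing a color, contradicting the distance-$2$ property of $\phi$. A short count then gives $|M| = \binom{R}{2} \leq \binom{D^{2}+1}{2} \leq p_{4}(D)$; the difference $p_{4}(D) - \binom{D^{2}+1}{2} = D^{3}-1$ is nonnegative for every $D \geq 1$.

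The main (and entirely standard) step is the greedy bound on $\chi(G^{2})$; everything else is direct verification. The slack between $\binom{D^{2}+1}{2}$ and $p_{4}(D)$ suggests the paper's intended construction is a slightly wasteful variant of this idea, with the extra room absorbed into the particular combinatorial accounting that defines $p_{4}$.
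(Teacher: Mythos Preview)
Your argument is correct and takes a genuinely different, and considerably simpler, route than the paper's. The paper proceeds by induction on $D$: it partitions $V$ into BFS levels $V_0,V_1,\dots$ from a root, observes each level induces a graph of maximum degree at most $D-1$, invokes the inductive hypothesis inside each level with $p_4(D-1)$ colors, and then introduces $2D$ new blocks of $D^2$ colors each (so $2D^3$ fresh colors) with carefully chosen large integer weights to force every cross-level edge into a prescribed new color class while leaving the within-level splitting undisturbed. The recursion $p_4(D)=p_4(D-1)+2D^3$ with $p_4(1)=1$ is exactly what produces the closed form $\tfrac{D^2(D+1)^2}{2}-1$, so the polynomial $p_4$ is an artifact of that inductive scheme. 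Your distance-$2$ coloring plus the $\{0,1\}$ indicator weighting bypasses all the bookkeeping in one stroke and in fact gives the strictly smaller bound $\binom{D^2+1}{2}$; the slack $D^3-1$ you computed is precisely the waste in the paper's induction. One cosmetic remark: to match the statement literally you can pad $M$ with $p_4(D)-\binom{R}{2}$ dummy coordinates carrying weight $0$; since every edge already attains its unique maximum value $2$ at its designated pair, the dummy $G_i$ are edgeless and the conclusion is unaffected.
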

Since we already showed that for an edge the variety has rational singularities, and for a disjoint union the variety is the product, this shows that for $\dim(W)\geq2p_{4}(D)$, $X(G,W)$ has rational singularities.   

The proof will be split into five stages:

1) Split the vertices of the graph to different heights, s.t. in each height we have a graph with lower maximal degree.

2) Weight each level using induction on the maximal degree.

3) Add new colors, and choose a color for each edge between different levels.

4) Add weights for the new colors, realizing the chosen colors for edges between different levels without disturbing the coloring of edges inside levels.

5) Show this works.    

\subsubsection*{Stage 1.}
Assume w.l.o.g. that the graph is connected. We choose some vertex $v_{0}$ and define $V_{i}=\{v\in V \mid \text{the distance between } v_{0} \text{ and } v \text{ is exactly } i\}$.

Notice the following things:

1) The vertices in $V_{i}$ are connected only to $V_{i-1},V_{i},V_{i+1}$.

2) The induced graph on $V_{i}$ has maximal degree $\leq D-1$.

\subsubsection*{Stage 2.}
By induction, each of the graphs $V_{i}$ can be weighted in $p_{4}(D-1)$ colors s.t. the graph for each color is a disjoint union of edges and vertices.

Denote the weighting on $V_{i}$ by $w_{i}$.

Add a constant if necessary so that for each $i$, $w_{i}(x_{i})\geq w_{i-1}(x_{i-1})+w_{i-2}(x_{i-2})+10$ for each triplet $(x_{i},x_{i-1},x_{i-2})\in V_{i}\times V_{i-1}\times V_{i-2}$.

\subsubsection*{Stage 3.}
Now add $2D$ new sets of colors $A_{1,1},A_{1,2},...,A_{1,D},A_{2,1},A_{2,2},...,A_{2,D}$ each of size $D^2$.

Let $A_i = \bigcup_k A_{i, k}$. From now on, when we write $A_{i,k}$ or $A_i$ we mean that $i$ is taken mod $2$. 

We now choose the colors for each edge between different levels in the following way:

First, we choose that between $V_{i}$ and $V_{i+1}$ all edges have colors in $A_i$.
Now for each level $V_{i}$, choose for each vertex a number $1\leq k \leq D$ s.t. neighbors in $V_{i}$ have different numbers, this is possible since the maximal degree in $V_{i}$ is $<D$.

Finally for each edge $\{u, v\}$ between $u\in V_{i}$ and $v\in V_{i+1}$ we choose a color in $A_{i,k}$ where $k$ is the number chosen for $v$, such that every other edge between $V_i$ and $V_{i+1}$ whose vertex in $V_i$ is a neighbor of $v$ has a different color. This is possible since we only have $D^2 - 1$ requirements and $D^2$ color to choose from.

\subsubsection*{Stage 4.}

Now we have to choose the weighting of the new colors.

For each vertex $v\in V_{i+1}$, and for every edge between him and $V_{i}$, we give the color of that edge on $v$ the weight $\max(w_{i+1}(v))+\max(w_{i}(w))+5$ where the first $\max$ is taken over all the colors, and the second is taken over all vertices $w\in V_{i}$ and the colors.
Further more for an edge between $v$ and $V_{i+2}$ we give the color of that edge on $v$ the weight $5$.

\subsubsection*{Stage 5.}

We claim that this works, first we show that the coloring inside $V_{i}$ remains the same.

Since we did not change the weighting of the colors we got by induction, it's enough to show that each edge in $V_{i}$ is colored in one of the original colors.

let $e=\{u,v\}$ be an edge, and look at a color we added. If it is in some $A_{i,k}$ then on $v,u$ it can have weight at most $5$, and obviously $10$ is less than what all the original colors have on $v,u$.

If the color is in $A_{i-1,k}$ then it can only have non-zero weight on one of these vertices by the way we chose colors for edges. The weight is $\max(w_{i}(v))+\max(w_{i-1}(w))+5$, but any color of the original colors has weight at least $\max(w_{i-1}(w))+10$, and so the maximal color of $v$ has weight on $e$ bigger then the new color has on $e$.

Now all that is left is to show that edges between levels are colored in the way we wanted.

Let $e=\{u,v\}$ be an edge between different levels. On the color we wanted for it, the weight is $\max(w_{i}(v))+\max(w_{i-1}(w))+5+5$. Any other color from the new colors has weight equal to either $5$ or $\max(w_{i}(v))+\max(w_{i-1}(w))+5$ but not both, by the way we choose the colors, and any color of the original colors has weight at most $\max(w_{i}(v))+\max(w_{i-1}(w))$, so we are done.

\subsubsection*{Trees}
We also claim that for trees we can have an even better result, if the maximal degree is $D$ we can do this with $D$ colors.

\begin{proof}
	Pick a leaf of the tree, by induction we have a weighting satisfying the condition on the rest of the tree. Look at the edges of the one neighbor of the leaf. Except for the leaf there are at most $D-1$ of those, so pick a color that none of them have, and give the leaf a sufficiently large weight on that color. This clearly satisfies the conditions.  
\end{proof}
\medskip
\printbibliography


	





\end{document}